\newcommand{\be}{{\beta}}
\newcommand{\g}{{\gamma}}
\newcommand{\Des}{{\rm{Des}}}
\newcommand{\des}{{\rm{des}}}
\newtheorem{thm}{Theorem}[section]
\newtheorem{pro}[thm]{Proposition}
\newtheorem{lem}[thm]{Lemma}
\newtheorem{cla}[thm]{Claim}
\newtheorem{cor}[thm]{Corollary}
\theoremstyle{definition}
\newtheorem{obs}[thm]{Observation}
\newtheorem{rem}[thm]{Remark}
\newtheorem{exa}[thm]{Example}
\newtheorem{defn}[thm]{Definition}
\newtheorem{definition}[thm]{Definition}
\newtheorem{prop}[thm]{Proposition}
\newtheorem{conj}[thm]{Conjecture}
\newcommand{\een}{\end{enumerate}}
\newcommand{\blem}{\begin{lem}}
\newcommand{\elem}{\end{lem}}
\newcommand{\bcl}{\begin{cla}}
\newcommand{\ecl}{\end{cla}}
\newcommand{\ethm}{\end{thm}}
\newcommand{\bpr}{\begin{pro}}
\newcommand{\epr}{\end{pro}}
\newcommand{\bco}{\begin{cor}}
\newcommand{\eco}{\end{cor}}
\newcommand{\bcon}{\begin{conj}}
\newcommand{\econ}{\end{conj}}
\newcommand{\bde}{\begin{defn}}
\newcommand{\ede}{\end{defn}}
\newcommand{\bex}{\begin{exa}}
\newcommand{\eexa}{\end{exa}}
\newcommand{\bobs}{\begin{obs}}
\newcommand{\eobs}{\end{obs}}
\newcommand{\bexe}{\begin{exe}}
\newcommand{\eexe}{\end{exe}}
\newcommand{\R}{{\mathbb{R}}}
\newcommand{\grn}{G_{m,n}}
\begin{document}
\title[Identities involving Stirling numbers of types $B$ and $D$]{Some identities involving second kind Stirling numbers of types $B$ and $D$}

\thanks{This research was supported by a grant from the Ministry of Science and Technology, Israel, and the France's Centre National pour la Recherche Scientifique (CNRS)}

\author{Eli Bagno, Riccardo Biagioli and David Garber}

\address{Eli Bagno, Jerusalem College of Technology\\
21 Havaad Haleumi St. Jerusalem, Israel}
\email{bagnoe@g.jct.ac.il}

\address{Riccardo Biagioli\\
Institut Camille Jordan, Universit\'e Claude Bernard Lyon 1 \\
69622 Villeurbanne Cedex, France}
\email{biagioli@math.univ-lyon1.fr}

\address{David Garber \\
Department of  Applied Mathematics, Holon Institute of Technology,
52 Golomb St., PO Box 305, 58102 Holon, Israel,  and (sabbatical:) Einstein Institute of Mathematics, Hebrew University of Jerusalem, Jerusalem, Israel }
\email{garber@hit.ac.il}

\date{\today}

\maketitle

\begin{abstract}

Using Reiner's definition of Stirling numbers of the second kind in types $B$ and $D$, we generalize two well-known identities concerning the classical Stirling numbers of the second kind. The first identity relates them with Eulerian numbers and the second identity interprets them as entries in a transition matrix between the elements of two standard bases of the polynomial ring $\mathbb{R}[x]$. Finally, we generalize these identities to the group of colored permutations $G_{m,n}$.

\end{abstract}


\section{Introduction}
The {\em Stirling number of the second kind}, denoted $S(n,k)$, is defined as the number of partitions of the set $[n]:=\{1,\dots,n\}$ into $k$ non-empty subsets (see \cite[page 81]{EC1}). Stirling numbers of the second kind arise in a variety of problems  in enumerative combinatorics; they have many combinatorial interpretations, and have been generalized in various contexts and in different ways.

In the geometric theory of Coxeter groups they appear as follows. For any finite Coxeter group $W$, there is a corresponding hyperplane arrangement $\mathcal{W}$, whose elements are the reflecting hyperplanes of $W$. Associated with $\mathcal{W}$, there is the set of all the intersections of these hyperplanes, ordered by reverse inclusion, called the {\em intersection lattice}, and denoted $L(\mathcal{W})$ (see e.g. \cite{BS,Stanley-arr}). It is well-known that in the Coxeter group of type $A$, $L(\mathcal{A}_n)$ is isomorphic to the lattice of the set partitions of $[n]$. By this identification, the subspaces of dimension $n-k$ are counted by $S(n,k)$. In this geometric context, Stirling numbers of the second kind are usually called Whitney numbers (see \cite{Su,Za} for more details).

For Coxeter groups of types $B$ and $D$, Zaslavsky \cite{Za} gave a description of $L(\mathcal{B}_n)$ and $L(\mathcal{D}_n)$ by using the general theory of signed graphs. Then, Reiner~\cite{R} gave a different combinatorial representation of $L(\mathcal{B}_n)$ and $L(\mathcal{D}_n)$ in terms of new types of set partitions, called $B_n$- and {\em $D_n$-partitions}. We call the number of $B_n$- (resp. $D_n$-) partitions with $k$ pairs of nonzero blocks the  {\em Stirling number of the second kind of type} $B$ (resp. {\em type} $D$).
\smallskip

The posets of $B_n$- and $D_n$-partitions, as well as their isomorphic intersection lattices, have been studied in several papers~\cite{BjS, BjW, BS, CW1, CW2, Su},  from algebraic, topological and combinatorial points of view. However, to our knowledge, two famous identities concerning the classical Stirling numbers of the second kind (see e.g. Bona~\cite[Theorems 1.8 and 1.17]{Bo}) have not been generalized to types $B$ and $D$ in a combinatorial way: the first identity relates the Stirling numbers to the Eulerian numbers, and the second one  formulates a change of bases in $\R[x]$, both will be described below.

\smallskip

The original definition of the {\em Eulerian numbers} was given by Euler in an analytic context \cite[\S 13]{Eu}. Later, they began to appear in combinatorial problems, as the Eulerian number $A(n,k)$ counts the number of permutations in the symmetric group $S_n$ having $k-1$ descents, where a {\it descent} of $\sigma \in S_n$ is an element of the {\em descent set} of $\sigma$, defined by :
\begin{equation}\label{des_typeA}
{\rm Des}(\sigma):=\{i \in [n-1]\mid \sigma(i)>\sigma(i+1)\}.
\end{equation}
We denote by ${\des}(\sigma):=|\Des(\sigma)|$ the {\em descent number}.

\smallskip

The first above-mentioned identity relating Stirling numbers of the second kind and Eulerian numbers is the following one, see e.g. \cite[Theorem 1.17]{Bo}:

\begin{thm}\label{thm:typeA}
For all non-negative integers $n \geq r$, we have
\begin{equation}\label{eq:Stirling_Eulerian_typeA}
S(n,r) = \frac{1}{r!}\sum_{k=0}^r A(n, k) \binom{n-k}{r-k}.
\end{equation}
\end{thm}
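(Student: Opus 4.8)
The plan is to give a bijective/combinatorial proof of the identity
\[
r!\,S(n,r) = \sum_{k=0}^{r} A(n,k)\binom{n-k}{r-k},
\]
by interpreting the left-hand side as the number of ordered set partitions of $[n]$ into $r$ non-empty blocks. Indeed, $S(n,r)$ counts unordered partitions into $r$ blocks, so multiplying by $r!$ counts the same partitions together with a choice of linear order on the blocks; equivalently, $r!\,S(n,r)$ is the number of surjections from $[n]$ onto $[r]$. So it suffices to count surjections $f\colon[n]\twoheadrightarrow[r]$ in a second way and obtain the right-hand side.

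The key step is to sort surjections by the descent structure of an associated word. Given a surjection $f$, write the word $w=f(1)f(2)\cdots f(n)$ over the alphabet $[r]$ in which every letter of $[r]$ appears at least once. I would classify such words according to the positions $i\in[n-1]$ where $w(i)>w(i+1)$ (a \emph{strict descent}) versus $w(i)\le w(i+1)$ (a \emph{weak ascent}). The standard tool here is the bijection between words on $[r]$ with exactly $j$ descents and pairs consisting of a permutation $\sigma\in S_n$ with $\des(\sigma)=j$ together with a suitable ``compatible'' monotone choice; more precisely one uses the identity, valid for any fixed word-length $n$,
\[
\sum_{j\ge 0} A(n,j)\binom{x+n-1-j}{n} = x^n,
\]
which counts words of length $n$ over $\{1,\dots,x\}$ by their descent number. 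Restricting to \emph{surjective} words over $[r]$ is then an inclusion–exclusion, or better, a direct refinement: a surjective word of length $n$ onto $[r]$ with underlying permutation having $k$ descents corresponds to choosing where the $r$ ``new record'' values are forced. This is exactly what produces the factor $\binom{n-k}{r-k}$: after fixing a permutation $\sigma\in S_n$ with $\des(\sigma)=k$, the surjective words onto $[r]$ whose standardization is $\sigma$ are counted by the number of ways to insert $r-k$ additional ``level steps'' among the $n-k$ non-descent positions (including a virtual position), i.e. $\binom{n-k}{r-k}$.

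Concretely, the steps I would carry out are: (1) restate $r!\,S(n,r)$ as the number of surjective words $w\in[r]^n$; (2) attach to each such $w$ its standardization $\sigma=\mathrm{std}(w)\in S_n$ (break ties in $w$ left-to-right), and observe $\Des(w)\subseteq\Des(\sigma)$ with $\des(\sigma)=k$ for some $0\le k\le r-1$, while the ``repeated-value'' structure of $w$ is recorded by a weakly increasing step function; (3) show that the fiber over a fixed $\sigma$ with $\des(\sigma)=k$ has size $\binom{n-k}{r-k}$, by encoding $w$ as the choice of which of the $n-k$ ascent-or-equal positions of $\sigma$ (out of a total that lines up to give $\binom{n-k}{r-k}$) are promoted to strict increases of value versus kept equal; (4) sum over $\sigma$, grouping by $k=\des(\sigma)$, to get $\sum_{k} A(n,k)\binom{n-k}{r-k}$. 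An alternative, if the bijection in step (3) proves delicate, is a purely algebraic route: start from the two expansions of $x^n$ — via Stirling numbers, $x^n=\sum_r S(n,r)\,x(x-1)\cdots(x-r+1)=\sum_r r!\,S(n,r)\binom{x}{r}$, and via the Eulerian/Worpitzky identity $x^n=\sum_k A(n,k)\binom{x+n-1-k}{n}$ — and then extract the coefficient of $\binom{x}{r}$ from $\binom{x+n-1-k}{n}$ using the Vandermonde-type identity $\binom{x+n-1-k}{n}=\sum_{r}\binom{n-1-k}{n-r}\binom{x}{r}$, recognizing $\binom{n-1-k}{n-r}=\binom{n-1-k}{r-1-k}$ and re-indexing; care with the exact binomial argument ($n-k$ versus $n-1-k$) is needed to land on $\binom{n-k}{r-k}$ rather than an off-by-one variant.

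The main obstacle I anticipate is pinning down step (3) cleanly — i.e. matching the ``repeat pattern'' of a surjective word against the descent set of its standardization so that the count of fibers is exactly $\binom{n-k}{r-k}$ and not a shifted binomial. The subtlety is that the word may have weak descents (equal consecutive letters) sitting at non-descent positions of $\sigma$, and one must correctly count the number of ``free'' positions at which a value is allowed to stay level: there are $n$ gaps (including the leading one) minus the $k$ forced strict rises coming from descents of $\sigma$, and exactly $r-1$ of the level-or-rise choices must be genuine value-increases to reach the top value $r$ — bookkeeping that yields $\binom{n-k}{r-k}$ but is easy to get wrong by one. Once that fiber count is nailed, summing over permutations by descent number is immediate and the identity follows; I would present the algebraic Worpitzky derivation as a cross-check.
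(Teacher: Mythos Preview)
Your plan is correct and is essentially the argument the paper has in mind: the paper does not prove this type~$A$ identity itself but cites Bona, and its proof of the type~$B$ analogue (Theorem~\ref{main_thm_B}) is a direct adaptation of Bona's argument. That argument is the same double count of ordered set partitions of $[n]$ into $r$ blocks that you propose, but phrased via \emph{separators} rather than standardization, and this phrasing dissolves the off-by-one worries you anticipate. Concretely: given an ordered partition $(B_1,\dots,B_r)$, list the elements of each $B_i$ increasingly and concatenate to obtain $\sigma\in S_n$; the descents of $\sigma$ then form a subset of the $r-1$ block boundaries. Conversely, given $\sigma$ with $k-1$ descents (so counted by $A(n,k)$ in the paper's convention), place separators at the $k-1$ descent positions and choose $r-k$ of the remaining $n-1-(k-1)=n-k$ gaps as additional boundaries, yielding exactly $\binom{n-k}{r-k}$ ordered partitions. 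This is precisely your step~(3), and the fiber size is immediate once you remember that here $A(n,k)$ means $k-1$ descents, not $k$.

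One small slip to correct: with left-to-right tie-breaking, standardization satisfies $\Des(\mathrm{std}(w))=\Des(w)$, not merely $\supseteq$, since equal adjacent letters of $w$ become ascents of $\sigma$. Your alternative algebraic route via the Worpitzky identity and Vandermonde also works and is a legitimate cross-check, though the paper (following Bona) does not pursue it.
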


The second identity arises when one expresses the standard basis of the polynomial ring $\R[x]$ as a linear combination of the basis consisting of the falling factorials (see e.g. the survey of Boyadzhiev \cite{Boy}):

\begin{thm}\label{thm:typeA_falling}
Let $x \in \mathbb{R}$ and let $n \in \mathbb{N}$. Then we have
\begin{equation}\label{reg}
x^n=\sum\limits_{k=0}^n{S(n,k)[x]_k},
\end{equation}
\noindent
where $[x]_k:=x(x-1) \cdots (x-k+1)$ is the {\em falling factorial of degree $k$} and $[x]_0:=1$.
\end{thm}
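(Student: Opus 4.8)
The plan is to prove the identity first for positive integer values of $x$ by a direct counting argument, and then to conclude by the fact that two polynomials that agree at infinitely many points coincide. Fix a positive integer $m$ and set $x=m$. The left-hand side $m^n$ is the number of functions $f\colon[n]\to[m]$. On the right-hand side, $[m]_k=m(m-1)\cdots(m-k+1)$ counts the injections from a $k$-element set into $[m]$ (and vanishes as soon as $k>m$, which causes no harm).

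To see that $\sum_{k=0}^n S(n,k)[m]_k$ also enumerates the functions $[n]\to[m]$, I would classify such an $f$ according to the size $k=|f([n])|$ of its image. The nonempty fibres $f^{-1}(j)$, $j\in f([n])$, form a partition of $[n]$ into exactly $k$ blocks, every partition of $[n]$ into $k$ blocks arises in this way, and there are $S(n,k)$ of them; once the partition is fixed, recovering $f$ amounts to choosing an injection from the set of $k$ blocks into $[m]$, of which there are $[m]_k$. Summing over $k$ from $0$ to $n$ (the terms with $k>m$ contributing $0$) gives $m^n=\sum_{k=0}^n S(n,k)[m]_k$. Since both sides of \eqref{reg} are polynomials in $x$ of degree at most $n$ agreeing at every positive integer, they are equal in $\R[x]$.

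An alternative, purely algebraic route is induction on $n$: the case $n=0$ reads $1=S(0,0)[x]_0$, and for the inductive step one multiplies the degree-$(n-1)$ identity by $x$ and uses the elementary relation $x\,[x]_k=[x]_{k+1}+k\,[x]_k$ (from $x=(x-k)+k$) together with the Stirling recurrence $S(n,k)=S(n-1,k-1)+k\,S(n-1,k)$; a reindexing of the resulting sum yields the degree-$n$ identity.

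There is no real obstacle here: the only points needing a little care are the degenerate terms (the $k=0$ term, corresponding to the empty partition and the empty function) and the justification that a polynomial identity may be deduced from agreement on the positive integers, both routine. This classical statement is recorded chiefly as the prototype to be generalized to types $B$, $D$, and to $G_{m,n}$ in the later sections.
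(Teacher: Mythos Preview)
Your proof is correct, and your primary counting argument is exactly the combinatorial proof the paper itself cites (from Stanley's \emph{Enumerative Combinatorics}, interpreting $x^n$ as the number of functions $[n]\to[x]$ and classifying by image size); the paper does not supply an independent proof of this classical identity, only references to the literature. Your alternative inductive argument is a fine bonus and is also standard.
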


There are some known proofs for the last identity. A combinatorial one, realizing $x^n$ as the number of functions from the set  $\{1,\dots,n \}$ to the set $\{1,\dots,x \}$ (for a positive integer $x$), is presented in \cite[Eqn. (1.94d)]{EC1}.
The first geometric proof is due to Knop \cite{K}.

\smallskip

In this paper, we use Stirling numbers of the second kind of types $B$ and $D$, in order to generalize the identities stated in  Equations \eqref{eq:Stirling_Eulerian_typeA} and \eqref{reg}. Theorems~\ref{main_thm_B} and \ref{main_thm_D} below are generalizations of the first identity for types $B$ and $D$: they will be proven by providing explicit procedures to construct ordered set partitions starting from the elements of the corresponding Coxeter groups.

Theorems~\ref{thm_bala} and~\ref{thm_bala_D} generalize the second identity. We present here a geometric approach, suggested to us by Reiner~\cite{R1}, which is based on some geometric characterizations of the intersection lattices of types $B$ and $D$. 
Moreover, we show how to generalize these two classical identities to the colored permutations group $G_{m,n}$.

\smallskip

The rest of the paper is organized as follows.  Sections \ref{Eulerian numbers} and \ref{Set partitions} present the known generalizations of the Eulerian numbers and  the set partitions, respectively, to the Coxeter groups of types $B$ and $D$. In Sections \ref{connections between Stirling and Euler} and  \ref{Falling polynomials for Coxeter groups}, we state our generalizations of the two identities and  prove them.
Finally, in Section \ref{Possible generalizations}, we present some possible extensions of the main results.

\section{Eulerian numbers of types $B$ and $D$} \label{Eulerian numbers}

We start with some notations. For $n\in \mathbb{N}$, we set $[\pm n]:=\{\pm 1,\ldots,\pm n\}$. For a subset $B \subseteq [\pm n]$, we denote by $-B$ the set obtained by negating all the elements of $B$, and by $\pm B$ we denote the unordered pair of sets $B,-B$.

\medskip

Let $(W,S)$ be a Coxeter system. As usual, denote by $\ell(w)$ the {\em length} of $w \in W$, which is the minimal integer $k$ satisfying $w=s_1\cdots s_k$ with $s_i \in S$.
The {\em (right) descent set} of $w \in W$ is defined to be
$${\rm Des}(w):=\{s \in S \mid \ell(ws)<\ell(w)\}.$$
A combinatorial characterization of ${\rm Des}(w)$ in type $A$ is given by Equation \eqref{des_typeA} above. Now we recall analogous descriptions in types $B$ and $D$.

\smallskip

We denote by $B_{n}$ the group of all bijections $\be$ of the set
$[\pm n]$ onto itself such that
\[\be(-i)=-\be(i)\]
for all $i \in [\pm n]$, with composition as the
group operation. This group is usually known as the group of {\em
signed permutations} on $[n]$.
If $\be \in B_{n}$, then we write $\be=[\be(1),\dots,\be(n)]$ and we call this the
{\em window} notation of $\be$.

As a set of generators for $B_n$ we
take $S_B:=\left\{ s_0^B, s_1^B,\ldots,s_{n-1}^B \right\}$ where for $i \in[n-1]$
\[s_i^B:=[1,\ldots,i-1,i+1,i,i+2,\ldots,n] \;\; {\rm and} \;\; s_0^B:=[-1,2,\ldots,n].\]
It is well-known that $(B_n,S_B)$ is a Coxeter system of type $B$ (see e.g. \cite[\S 8.1]{BB}). The following characterization of the (right) descent set of $\be \in B_n$ is well-known \cite{BB}.

\begin{prop} Let $\be \in B_n$. Then
\begin{eqnarray*}
\Des_B(\be)=\{i \in [0,n-1] \mid \be(i) > \be(i+1)\},
\end{eqnarray*}
where  $\be(0):=0$ (we use the usual order on the integers). In particular, $0 \in \Des_B(\be)$ is a descent if and only if $\beta(1) < 0$. We set $\des_B(\be):=|\Des_B(\be)|.$
\end{prop}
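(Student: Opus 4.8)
The plan is to combine the standard Coxeter-theoretic definition of the right descent set, $\Des_B(\be)=\{s\in S_B\mid \ell(\be s)<\ell(\be)\}$, with an explicit length formula for $B_n$ in terms of the window notation. Recall that for $\be\in B_n$ one has
\[
\ell(\be)=\inver(\be)+\sum_{\substack{i\in[n]\\ \be(i)<0}}|\be(i)|,
\]
where $\inver(\be)=|\{(i,j)\mid 1\le i<j\le n,\ \be(i)>\be(j)\}|$ (see e.g. \cite[\S 8.1]{BB}); equivalently $\ell(\be)$ equals the number of pairs $1\le i<j\le n$ with $\be(i)>\be(j)$ together with the number of pairs $1\le i\le j\le n$ with $\be(-i)>\be(j)$, i.e. $-\be(i)>\be(j)$. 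I would then analyze the effect of right-multiplication by each generator on this statistic.

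First I would handle $s_i^B$ for $i\in[n-1]$: right multiplication by $s_i^B$ swaps the entries in positions $i$ and $i+1$ of the window, leaving all absolute values of entries unchanged, so the negative part of the length formula is unaffected and only $\inver$ changes, by $\pm 1$ according to whether $\be(i)>\be(i+1)$ or $\be(i)<\be(i+1)$. Hence $\ell(\be s_i^B)<\ell(\be)$ iff $\be(i)>\be(i+1)$, which is exactly the asserted condition for $i\in[n-1]$. Next I would handle $s_0^B$: right multiplication by $s_0^B$ negates the first entry, sending $\be(1)$ to $-\be(1)$. One checks that this changes the length by $+1$ if $\be(1)>0$ and by $-1$ if $\be(1)<0$ — for instance by noting $\be s_0^B = [-\be(1),\be(2),\dots,\be(n)]$ and tracking both the sign-sum term and the inversions involving position $1$ and the ``negative'' positions. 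Thus $0\in\Des_B(\be)$ iff $\be(1)<0$, which matches the statement once we set $\be(0):=0$ and use $\be(-i)=-\be(i)$ so that $\be(0)=0>\be(1)$ is equivalent to $\be(1)<0$.

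The only real subtlety, and the step I would be most careful about, is the bookkeeping in the $s_0^B$ case: verifying that the combined change in $\inver$ (over pairs $(i,j)$ and over ``barred'' pairs) plus the change in the sign-sum term is exactly $\pm1$ with the correct sign. I would organize this by writing the length as a sum over all $2\binom{n}{2}+n$ relevant ordered pairs drawn from $[\pm n]$ and checking that flipping the sign of $\be(1)$ toggles precisely one of these comparisons (the pair consisting of position $-1$ and position $1$, i.e. the comparison ``$-\be(1)>\be(1)$''), while leaving all others invariant because $|\be(1)|$ is unchanged. This gives the clean result, and the final sentence of the proposition (that $0$ is a descent iff $\be(1)<0$) is then immediate, as is the convention-based reformulation with $\be(0):=0$. $\Box$
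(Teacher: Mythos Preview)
The paper does not prove this proposition; it is stated as well known and attributed to Bj\"orner--Brenti \cite{BB}. Your plan is correct and is essentially the standard argument one extracts from that reference: use the explicit combinatorial length formula for $B_n$ and check the effect of right multiplication by each simple generator.

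One small point of care in your $s_0^B$ analysis: the reason the net change is $\pm 1$ is not that the individual comparisons involving position $1$ are invariant ``because $|\be(1)|$ is unchanged'' --- in fact, flipping the sign of $\be(1)$ can change many of the comparisons $\be(1)>\be(j)$ and $-\be(1)>\be(j)$ for $j>1$. The correct bookkeeping is that for each $j>1$ these two comparisons are \emph{interchanged} (the first-type pair $(1,j)$ and the second-type pair $(1,j)$ swap truth values), so their sum is unchanged; the only comparison with no partner is the diagonal one $-\be(1)>\be(1)$, which indeed toggles. With that adjustment your argument goes through exactly as planned.
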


For all non-negative integers $n\geq k$, we set
\begin{equation}\label{def:Eulerian_B}
A_B(n,k):=|\{\be \in B_n \mid \des_B(\be)= k \}|,
\end{equation}
and we call them the {\em Eulerian numbers of type} $B$.

Note that in our context $A_B(n,k)$ counts permutations in $B_n$ having $k$ descents rather than $k-1$, like in type $A$, since this produces nicer formulas.

\medskip

We denote by $D_{n}$ the subgroup of $B_{n}$ consisting of all the
signed permutations having an {\em even} number of negative entries in
their window notation.
It is usually called the {\em even-signed permutation group}. As a set of generators for $D_n$ we take
$S_D:=\left\{ s_{0}^D,s_{1}^D,\dots,s_{n-1}^D \right\}$ where for $i \in [n-1]$:
\[s_i^D:=s_i^B \;\; {\rm and} \;\; s_{0}^D:=[-2,-1,3,\ldots,n].\]
It is well-known that $(D_n,S_D)$ is a Coxeter system of type $D$, and there is a direct combinatorial way to compute the (right) descent set of $\g \in D_{n}$ (see e.g. \cite[\S 8.2]{BB}):
\begin{prop}Let $\g \in D_n$. Then
\begin{eqnarray*}\label{lD}
\Des_D(\g)=\{i \in [0,n-1] \mid \g(i)>\g(i+1)\},
\end{eqnarray*}
where $\g(0):=-\g(2)$. In particular,  $0 \in \Des_D(\gamma)$ if and only if\break $\gamma(1)+\gamma(2)<0$.   We set $\des_D(\g):=|\Des_D(\g)|$.
\end{prop}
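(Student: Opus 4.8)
\emph{Proof idea.} The plan is to read off $\Des_D(\g)$ directly from its definition, $i\in\Des_D(\g)\iff\ell_D(\g s_i^D)<\ell_D(\g)$, using the standard combinatorial length formula for type $D$ (see \cite[\S 8.2]{BB}): for $\g\in D_n$,
\[\ell_D(\g)=\mathrm{inv}(\g)+\mathrm{nsp}(\g),\]
where $\mathrm{inv}(\g):=|\{(i,j):1\le i<j\le n,\ \g(i)>\g(j)\}|$ is the ordinary inversion number of the window of $\g$ and $\mathrm{nsp}(\g):=|\{(i,j):1\le i<j\le n,\ \g(i)+\g(j)<0\}|$ counts the negative sum pairs. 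Thus the statement reduces to deciding, for each generator $s_i^D$, whether $\ell_D(\g s_i^D)-\ell_D(\g)$ equals $+1$ or $-1$.

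For $i\in[n-1]$ this is immediate. Since $s_i^D=s_i^B$, right multiplication by $s_i^D$ exchanges the entries in positions $i$ and $i+1$ of the window of $\g$ and leaves all other positions fixed. The quantity $\g(i)+\g(i+1)$ is unchanged, so $\mathrm{nsp}(\g s_i^D)=\mathrm{nsp}(\g)$, whereas $\mathrm{inv}$ decreases by $1$ if $\g(i)>\g(i+1)$ and increases by $1$ if $\g(i)<\g(i+1)$. Hence $i\in\Des_D(\g)$ exactly when $\g(i)>\g(i+1)$, which is the claimed condition in this range.

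The case $i=0$ is the only one requiring actual work. Put $a:=\g(1)$ and $b:=\g(2)$; since $s_0^D=[-2,-1,3,\dots,n]$, a direct check shows that the window of $\g s_0^D$ is obtained from that of $\g$ by replacing the prefix $[a,b]$ with $[-b,-a]$ and leaving positions $3,\dots,n$ untouched. I would then track the change of $\mathrm{inv}+\mathrm{nsp}$ under $[a,b,\g(3),\dots,\g(n)]\mapsto[-b,-a,\g(3),\dots,\g(n)]$, separating the contribution of the pair $\{1,2\}$ from those of the pairs $\{1,j\}$ and $\{2,j\}$ with $j\ge 3$, and writing $[P]$ for the indicator that is $1$ if $P$ holds and $0$ otherwise. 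For the pair $\{1,2\}$: its inversion status is preserved ($a>b\iff -b>-a$), while its negative-sum status flips ($a+b<0\iff -(a+b)>0$, and $a+b\ne 0$ because $\g$ is a signed permutation), so this pair contributes $-1$ to $\ell_D(\g s_0^D)-\ell_D(\g)$ if $a+b<0$ and $+1$ if $a+b>0$. For a fixed $j\ge 3$, set $c:=\g(j)$ and rewrite each strict inequality $u>v$ as $v<u$: the two inversion indicators and two negative-sum indicators attached to $\{1,j\}$ and $\{2,j\}$ form the multiset $\{[c<a],[c<-a],[c<b],[c<-b]\}$ both before and after the move (the move merely reshuffles which of these four indicators sits in which of the four pair--statistic slots), so these pairs contribute $0$ to the net change. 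Therefore $\ell_D(\g s_0^D)-\ell_D(\g)$ is $-1$ when $\g(1)+\g(2)<0$ and $+1$ otherwise, i.e. $0\in\Des_D(\g)\iff\g(1)+\g(2)<0$. Finally, declaring $\g(0):=-\g(2)$ rewrites this as $\g(0)>\g(1)$, which folds the $i=0$ case into the uniform statement $\Des_D(\g)=\{i\in[0,n-1]\mid\g(i)>\g(i+1)\}$, after which $\des_D(\g):=|\Des_D(\g)|$. The only delicate point is the cancellation of the $j\ge 3$ contributions in the $i=0$ computation; everything else is routine bookkeeping. (Alternatively, one could bypass the length formula and invoke the root-theoretic criterion $\ell(ws)<\ell(w)\iff w(\alpha_s)\in\Phi^-$ for the simple roots of type $D$, arriving at the same computation.)
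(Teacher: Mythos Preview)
Your argument is correct. The computation for $i\in[n-1]$ is straightforward, and your handling of the $i=0$ case is clean: the key cancellation for $j\ge 3$ works because the multiset $\{[c<a],[c<b],[c<-a],[c<-b]\}$ is genuinely invariant under $[a,b]\mapsto[-b,-a]$, and the strict inequalities are justified since $|\g(1)|,\dots,|\g(n)|$ are distinct.

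As for comparison with the paper: there is nothing to compare. The paper does not prove this proposition; it simply states it as well known and points to \cite[\S 8.2]{BB}. Your proof is precisely the kind of derivation one extracts from the length formula $\ell_D=\mathrm{inv}+\mathrm{nsp}$ given there (Proposition~8.2.1 in Bj\"orner--Brenti), so you have supplied the argument the paper chose to outsource. The alternative you mention at the end---using the root-theoretic criterion $\ell(ws)<\ell(w)\iff w(\alpha_s)\in\Phi^-$ with the simple roots $\alpha_0=e_1+e_2$ and $\alpha_i=e_{i+1}-e_i$---is indeed shorter and equally valid, but what you wrote is fine.
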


For all non-negative integers $n\geq k$, we set:
\begin{equation}\label{def:Eulerian_D}
A_D(n,k):=|\{\g \in D_n \mid \des_D(\g)=k\}|,
\end{equation}
and we call them the {\em Eulerian numbers of type} $D$.
\smallskip

For example, if $\g=[1,-3,4,-5,-2,-6]$, then:
$$\Des_D(\g)=\{0,1,3,5\} \mbox{, but } \Des_B(\g)=\{1,3,5\}.$$

\section{Set partitions of types $B$ and $D$}\label{Set partitions}

In this section, we introduce the concepts of set partitions of types $B$ and $D$ as defined by Reiner \cite{R}.

As mentioned above, we denote by $L(\mathcal{W})$
the intersection lattice corresponding to the Coxeter hyperplane arrangement $\mathcal{W}$ of a finite Coxeter group $W$.
We will focus only on the hyperplane arrangements of types $A$, $B$ and
$D$. In terms of the coordinate functions $x_1,\dots,x_n$ in $\R^n$, they can be defined as follows:
\begin{eqnarray*}
\mathcal{A}_n &:=& \{\ \{ x_i = x_j\} \mid 1 \leq i < j \leq n \},\\
\mathcal{B}_n &:=&  \{\ \{ x_i = \pm x_j\} \mid 1 \leq i < j \leq n \}  \cup \{\ \{ x_i = 0\} \mid 1 \leq  i \leq n\},\\
\mathcal{D}_n &:=& \{\ \{ x_i = \pm x_j\} \mid 1 \leq i < j \leq n \}.
\end{eqnarray*}

It is well-known that in type $A$, the intersection lattice $L(\mathcal{A}_n)$ is isomorphic to the lattice of set partitions of $[n]$.

In type $B$, let us consider the following element of $L(\mathcal{B}_9)$:
$$\{x_1=-x_3=x_6=x_8=x_9, x_2=x_4=0, x_5=-x_7\}.$$
It can be easily presented as the following set partition of $[\pm 9]$:
$$\{\{1, -3,6,8,-9\},\{-1,3,-6,-8,9\},\{2,-2,4,-4\},\{5,-7\},\{-5, 7\}\}.$$
This probably was Reiner's motivation to define the set partitions of type $B$, as follows:

\begin{defn}
A {\it $B_n$-partition} is a set partition of $[\pm n]$ into blocks such that the following conditions are satisfied:
\begin{itemize}
\item There exists at most one block satisfying $-C=C$, called the {\em zero-block}. It is a subset of $[\pm n]$ of the form $\{\pm i \mid i \in S\}$ for some $S \subseteq [n]$.
\item If $C$ appears as a block in the partition, then $-C$ also appears in that partition.
\end{itemize}
\end{defn}

A similar definition holds for set partitions of type $D$:
\begin{definition}
A {\em $D_n$-partition} is a $B_n$-partition such that the zero-block, if exists, contains at least two positive elements.
\end{definition}

We denote by $S_B(n,r)$ ({\em resp.} $S_D(n,r)$) the number of  $B_n$- ({\em resp.}\break $D_n$-) partitions having exactly $r$ pairs of nonzero blocks. These numbers are called {\em Stirling numbers (of the second kind) of type $B$} ({\em resp. type} $D$).  They correspond, respectively,  to the sequences oeis.org/A039755 and oeis.org/A039760 in the OEIS. Tables~\ref{Table1} and~\ref{Table2} record these numbers for small values of $n$ and $r$.

\medskip

We now define the concept of an ordered set partition:
\begin{defn}
A $B_n$-partition (or $D_n$-partition) is {\em ordered} if the set of blocks is totally ordered and the following conditions are satisfied:
\begin{itemize}
\item If the zero-block exists, then it appears as the first block.
\item For each block $C$ which is not the zero-block, the blocks $C$ and $-C$ are adjacent.
\end{itemize}
\end{defn}

\begin{exa} The following partitions
\begin{eqnarray*}P_1 & = &\{ \{ \pm 3\}, \pm\{ -2,1 \}, \pm\{ -4,5 \} \}, \\
P_2 & = &\{ \pm\{ 1 \},\pm\{ 2 \}, \pm\{ -4,3 \} \},\\
P_3 &= &\left[ \{ \pm 1, \pm 3\}, \{ -2 \}, \{ 2 \},\{ -4,5 \},\{ -5,4 \} \right],
\end{eqnarray*}
are respectively, a $B_5$-partition which is not a $D_5$-partition, a $D_4$-partition with no zero-block, and an ordered $D_5$-partition having a zero-block.
\end{exa}
\begin{table}
\begin{center}
\begin{tabular}{r||r|r|r|r|r|r|r}
  $n/r$ & 0 & 1 & 2 & 3 & 4 & 5 & 6  \\
  \hline\hline
   0    & 1 &   &   &   &   &   &     \\
   1    & 1 & 1 &   &   &   &   &     \\
   2    & 1 & 4 & 1  &   &   &   &     \\
   3    & 1 & 13 & 9  & 1  &   &   &     \\
   4    & 1 & 40 & 58  & 16  & 1  &   &     \\
   5    & 1 & 121 & 330  & 170  & 25  & 1  &     \\
   6    & 1 & 364 & 1771  & 1520  & 395  & 36  & 1    \\
\end{tabular}
\end{center}
\bigskip

\caption{Stirling numbers $S_B(n,r)$ of the second kind of type $B$.}\label{Table1}
\end{table}

\begin{table}
\begin{center}
\begin{tabular}{r||r|r|r|r|r|r|r}
  $n/r$ & 0 & 1 & 2 & 3 & 4 & 5 & 6  \\
  \hline\hline
   0    & 1 &   &   &   &   &   &     \\
   1    & 0 & 1 &   &   &   &   &     \\
   2    & 1 & 2 & 1  &   &   &   &     \\
   3    & 1 & 7 & 6  & 1  &   &   &     \\
   4    & 1 & 24 & 34  & 12  & 1  &     &  \\
   5    & 1 & 81 & 190  & 110  & 20  & 1  &     \\
   6    & 1 & 268 & 1051  & 920  & 275  & 30  & 1    \\
\end{tabular}
\end{center}
\bigskip

\caption{Stirling numbers $S_D(n,r)$ of the second kind of type $D$.}\label{Table2}
\end{table}

\section{Connections between Stirling and Eulerian numbers of types $B$ and $D$}\label{connections between Stirling and Euler}

In this section, we present two generalizations of  Theorem \ref{thm:typeA} for Coxeter groups of types $B$ and $D$.

\begin{thm}\label{main_thm_B} For all non-negative integers $n \geq r$, we have:
$$ S_{B}(n,r)= \frac{1}{2^{r}r!} \sum\limits_{k=0}^r {A_B(n,k){\binom{n-k}{r-k}}}.$$
\end{thm}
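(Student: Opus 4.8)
The plan is to mimic the bijective/counting proof of Theorem~\ref{thm:typeA} in type $A$, where one shows that $r!\,S(n,r)$ counts ordered set partitions of $[n]$ into $r$ nonempty blocks, and then partitions these ordered set partitions according to an associated permutation, the fiber over a permutation with $k$ descents having size $\binom{n-k}{r-k}$. In our setting, $2^r r!\,S_B(n,r)$ should count the \emph{ordered $B_n$-partitions} with exactly $r$ pairs of nonzero blocks: indeed, given an (unordered) $B_n$-partition with $r$ pairs of nonzero blocks, the factor $r!$ orders the $r$ pairs and the factor $2^r$ chooses, within each pair $\{C,-C\}$, which of $C$, $-C$ comes first (the zero-block, if present, is forced to be first), so the left-hand side times $2^r r!$ is exactly the number of such ordered $B_n$-partitions. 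So it suffices to construct a surjection from ordered $B_n$-partitions onto $\bigsqcup_k \{\be \in B_n : \des_B(\be) = k\}$ whose fiber over each $\be$ with $\des_B(\be)=k$ has cardinality $\binom{n-k}{r-k}$; summing over $k$ then yields the claimed identity.

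First I would describe the map from an ordered $B_n$-partition $\Pi = (C_0 \mid C_1, -C_1, C_2, -C_2, \dots)$ (with $C_0$ the zero-block, possibly empty) to a signed permutation: read off the positive parts of the blocks in the given order — within each block listing its elements in increasing order — to obtain a sequence which, after recording signs appropriately, is the window of a $\be \in B_n$. The key design choice is to arrange the reading convention so that the descent set of $\be$ records exactly the ``block boundaries'' that are forced, i.e.\ so that descents of $\be$ correspond bijectively to the transitions between consecutive blocks in $\Pi$, with the convention $\be(0)=0$ accounting for the sign entering from the zero-block. Then a $\be$ with $k$ descents determines $k$ mandatory block-separations among the $n$ positions; to recover an ordered $B_n$-partition with $r$ pairs of nonzero blocks one must insert $r-k$ further separations among the remaining $n-k$ ``ascent'' gaps (and, when the zero-block is empty, also decide consistently how the first positive run is cut), which can be done in $\binom{n-k}{r-k}$ ways. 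This is precisely the type $A$ counting argument, refined to keep track of signs.

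The main obstacle, and the part requiring the most care, is the bookkeeping around the zero-block and the descent at position $0$. One must verify: (i) that the reading convention makes $\Des_B(\be)$ agree with block boundaries even when $C_0 = \emptyset$ versus $C_0 \neq \emptyset$ (the latter forces $0 \in \Des_B(\be)$ since the first positive entry then comes with a negative sign, matching $\beta(1)<0 \iff 0\in\Des_B(\be)$); (ii) that the fiber count is \emph{uniformly} $\binom{n-k}{r-k}$ regardless of whether a zero-block is present, so that the two cases combine without an extra correction term; and (iii) that the map is genuinely well-defined and surjective, i.e.\ every $\be\in B_n$ with $\des_B(\be)=k\le r$ arises from some ordered $B_n$-partition with $r$ pairs of nonzero blocks, and the forced separations from descents are always ``legal'' block cuts. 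Once these points are checked, the identity follows by partitioning the set of ordered $B_n$-partitions with $r$ pairs of nonzero blocks over the value of $\be$ and its descent number $k$:
\[
2^r r!\, S_B(n,r) = \sum_{k=0}^{r} A_B(n,k)\binom{n-k}{r-k},
\]
which is the claim after dividing by $2^r r!$.
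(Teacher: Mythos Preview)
Your approach is the paper's: both count ordered $B_n$-partitions with $r$ pairs of nonzero blocks in two ways, associating to each such partition the signed permutation $\be$ obtained by listing the positive elements of the zero-block (if any), then the elements of each $C_i$, each segment in increasing order; the fiber over a $\be$ with $\des_B(\be)=k$ then has size $\binom{n-k}{r-k}$, corresponding to choices of artificial separators among the $n-k$ non-descent spots.

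One detail in your bookkeeping is reversed, though, and it is exactly the point you flag as delicate. In (i) you write that $C_0\neq\emptyset$ ``forces $0\in\Des_B(\be)$ since the first positive entry then comes with a negative sign.'' It is the opposite: when $C_0\neq\emptyset$, the reading convention places the \emph{positive} elements of $C_0$ first, so $\be(1)>0$ and hence $0\notin\Des_B(\be)$. The presence of a zero-block corresponds to there being \emph{no} separator before $\be(1)$. Conversely, $C_0=\emptyset$ means a separator sits before $\be(1)$; this separator is forced when $\be(1)<0$ (i.e.\ $0\in\Des_B(\be)$) and is one of the $r-k$ artificial ones otherwise. The uniformity in (ii) then comes out because the total number of spots (the $n-1$ internal gaps together with the gap before $\be(1)$) is $n$, of which exactly $k$ are occupied by descent separators, leaving $n-k$ free regardless of whether a zero-block is present. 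Once you correct this orientation, your checks (ii) and (iii) are routine and the argument matches the paper's verbatim.
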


\begin{thm}\label{main_thm_D} For all non-negative integers $n \geq r$, with $n\neq 1$,  we have:
$$ S_D(n,r) = \frac{1}{2^{r}r!} \left[  \sum\limits_{k=0}^r {A_D(n,k){\binom{n-k}{r-k}}} + n \cdot 2^{n-1}(r-1)! \cdot S(n-1,r-1)\right],$$
where $S(n-1,r-1)$ is the usual Stirling number of the second kind.
\end{thm}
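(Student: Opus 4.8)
The plan is to imitate the ``cut--and--fold'' procedure behind Theorem~\ref{main_thm_B}, which sets up an explicit bijection between pairs (group element, descent--respecting cut set) and ordered set partitions, and then to account precisely for the one feature in which type $D$ differs from type $B$: the zero block of a $D_n$-partition is required to contain at least two positive elements.

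First I would reduce the statement to a counting identity for ordered $D_n$-partitions. Choosing a total order on the $r$ pairs of nonzero blocks ($r!$ ways) and, within each pair, which of $C$ and $-C$ comes first ($2^r$ ways), with the zero block always placed first, shows that the number of ordered $D_n$-partitions with exactly $r$ pairs of nonzero blocks equals $2^r r!\,S_D(n,r)$. Hence the theorem is equivalent to
$$2^r r!\,S_D(n,r)=\sum_{k=0}^{r}A_D(n,k)\binom{n-k}{r-k}+n\,2^{n-1}(r-1)!\,S(n-1,r-1).$$
Then I would describe the procedure. Given $\gamma\in D_n$ with $\des_D(\gamma)=k$, call a set $T$ \emph{admissible} if $\Des_D(\gamma)\subseteq T\subseteq\{0,1,\dots,n-1\}$ and $|T|=r$; there are $\binom{n-k}{r-k}$ of these, so the admissible pairs $(\gamma,T)$ number $\sum_{k}A_D(n,k)\binom{n-k}{r-k}$. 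From such a pair, cut the window $[\gamma(1),\dots,\gamma(n)]$ after each entry whose position lies in $T$ (a cut at $0$ meaning a cut before $\gamma(1)$); when $0\notin T$, declare the initial run, together with its negatives, to be the zero block; and turn every other run $R$ into the adjacent pair $R,-R$, the zero block first. Because $\Des_D(\gamma)\subseteq T$, each run is increasing, so the map is reversible on its image (sort each block, read off the run boundaries), and because $0\notin T$ forces $0\notin\Des_D(\gamma)$, i.e. $\gamma(1)+\gamma(2)\ge 0$, the zero block causes no trouble as long as its support has size $\ge 2$.

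The third step is to locate and count the exceptional behaviour. The construction degenerates exactly when $0\notin T$ and $1\in T$: then the initial run is the single entry $[\gamma(1)]$ and $\{\gamma(1),-\gamma(1)\}$ is not an admissible zero block; there is also a subtler source of non-injectivity, because, unlike in type $B$, the condition $0\notin T$ only guarantees $\gamma(1)+\gamma(2)\ge 0$ and not $\gamma(1)>0$, so the signs inside the initial run are not determined by the output. After fixing a convention that resolves the degenerate case, one is left to show that the corrected correspondence matches the $\sum_k A_D(n,k)\binom{n-k}{r-k}$ admissible pairs with all ordered $D_n$-partitions with $r$ pairs save for one distinguished family of cardinality $n\,2^{n-1}(r-1)!\,S(n-1,r-1)$. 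The shape of that number explains the structure of the missing objects: one singles out an element $j\in[n]$ (the one that ``wants'' to sit in a too-small zero block and is therefore forced into a singleton pair $\{j\},\{-j\}$), and what remains is an ordered partition of the signed ground set $[\pm(n-1)]$ into $r-1$ nonzero pairs --- equivalently, an ordered set partition of $[n-1]$ into $r-1$ blocks ($(r-1)!\,S(n-1,r-1)$ of them) together with an arbitrary sign on each of the $n-1$ elements ($2^{n-1}$ choices). I expect the reconciliation in this last step to be the main obstacle: one must verify that the deficit coming from the singleton initial runs and the surplus coming from the undetermined signs cancel to leave exactly $n\,2^{n-1}(r-1)!\,S(n-1,r-1)$, with careful attention to the distinguished roles of positions $0$ and $1$. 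Finally, the hypothesis $n\neq 1$ enters because for $n=1$ the statistic $\des_D$, and hence $A_D(1,k)$, is undefined, as $\gamma(0):=-\gamma(2)$ refers to a nonexistent entry.
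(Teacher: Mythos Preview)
Your overall plan matches the paper's: reduce to counting ordered $D_n$-partitions, run the cut--and--fold procedure, and account for the defect caused by the type-$D$ restriction on the zero block. You also correctly locate the two sources of trouble: the illegal singleton zero block when $0\notin T$ and $1\in T$, and the loss of the sign of $\gamma(1)$ when $0\notin T$.

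What is missing is the actual fix and, with it, the correct identification of the missed partitions. The paper resolves both difficulties at once with a single device: whenever there is a separator after position $1$ but not at position $0$, toggle the sign of $\gamma(1)$ and move that separator to position $0$ (the ``switch operation''). This converts the would-be singleton zero block into an honest first nonzero block with at least two entries, and the parity flip is exactly what restores injectivity. With this convention, the unreached ordered $D_n$-partitions (Lemma~\ref{structure of odd partitions}) are not those with ``first block $\{j\}$ for $j>0$ and arbitrary signs on the remaining $n-1$ elements,'' as you surmise, but those whose first block is a singleton $\{*\}$ of either sign and whose total number of negatives in $C_1,\dots,C_r$ is \emph{odd}. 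Your family and the paper's family happen to have the same cardinality $n\cdot 2^{n-1}(r-1)!\,S(n-1,r-1)$, but they are different sets, and only the parity description falls out of the procedure once the switch is in place. Without specifying the fix and proving which partitions it misses, your ``reconciliation'' step remains a promise rather than a proof; the switch operation together with the odd-parity characterization is precisely the content you still need to supply.
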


Now, by inverting these formulas, similarly to the known equality in type $A$, mentioned in \cite[Corollary 1.18]{Bo}:
\begin{equation}\label{Ank}
A(n,k)= \sum\limits_{r=1}^k {(-1)^{k-r} \cdot  r! \cdot S(n,r) \cdot \binom{n-r}{k-r}},
\end{equation}
we get the following expressions for the  Eulerian numbers of type $B$ ({\em resp.} type $D$) in terms of the Stirling numbers of type $B$ ({\em resp.} type $D$):

\begin{cor}\label{inverse_main_thm_star} For all  non-negative integers $n \geq k$, we have:
$$A_B(n,k)= \sum\limits_{r=0}^k {(-1)^{k-r} \cdot  2^{r}r! \cdot S_B(n,r) \cdot \binom{n-r}{k-r}}.$$
\end{cor}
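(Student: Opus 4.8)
The plan is to deduce the corollary purely formally from Theorem~\ref{main_thm_B} by a binomial inversion, exactly parallel to the way \eqref{Ank} is obtained from \eqref{eq:Stirling_Eulerian_typeA} in type $A$. Fix $n$ and put $b_r:=2^{r}r!\,S_B(n,r)$ and $a_k:=A_B(n,k)$ for $0\le r,k\le n$. Since $\binom{n-k}{r-k}=0$ for $k>r$ and equals $1$ for $k=r$, Theorem~\ref{main_thm_B} is the statement that the square lower-triangular, unipotent system
\[
b_r=\sum_{k=0}^{r}a_k\binom{n-k}{r-k}\qquad(0\le r\le n)
\]
holds, and what we must prove is the inverse relation $a_k=\sum_{r=0}^{k}(-1)^{k-r}b_r\binom{n-r}{k-r}$.

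The key step is the elementary inversion lemma: for finite sequences $(f_j)$, $(g_m)$,
\[
g_m=\sum_{j}f_j\binom{n-j}{m-j}\ \ (\forall m)
\qquad\Longleftrightarrow\qquad
f_m=\sum_{j}(-1)^{m-j}g_j\binom{n-j}{m-j}\ \ (\forall m).
\]
I would prove this by rewriting $\binom{n-j}{m-j}=\binom{n-j}{\,n-m\,}$ and substituting $i=n-j$, $u=n-m$: setting $F_i:=f_{n-i}$, $G_u:=g_{n-u}$, the left-hand relation becomes $G_u=\sum_{i\ge u}\binom{i}{u}F_i$ and the right-hand one becomes $F_u=\sum_{i\ge u}(-1)^{i-u}\binom{i}{u}G_i$, which are the two directions of the classical (upper-triangular) binomial inversion. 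Its validity rests on the single identity $\sum_{i}(-1)^{i-u}\binom{w}{i}\binom{i}{u}=\delta_{w,u}$, which follows from $\binom{w}{i}\binom{i}{u}=\binom{w}{u}\binom{w-u}{i-u}$ together with the vanishing of a complete alternating row of Pascal's triangle. Equivalently, one just checks that $\big((-1)^{k-r}\binom{n-r}{k-r}\big)_{k,r}$ is the inverse of the unipotent lower-triangular matrix $\big(\binom{n-k}{r-k}\big)_{r,k}$, the computation being the same.

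Applying the lemma with $f_k=a_k$ and $g_r=b_r$, the forward implication is exactly Theorem~\ref{main_thm_B}, so the backward implication gives
\[
A_B(n,k)=\sum_{r=0}^{k}(-1)^{k-r}b_r\binom{n-r}{k-r}
=\sum_{r=0}^{k}(-1)^{k-r}\,2^{r}r!\,S_B(n,r)\binom{n-r}{k-r},
\]
as desired. I do not expect any real obstacle here: the only points needing a little care are the bookkeeping of the index ranges (all sums are finite and the system is square, triangular and unipotent, so the inversion is legitimate without reference to out-of-range values) and the one Pascal-triangle identity underlying the classical inversion, both of which are routine. If preferred, the whole argument can be compressed to the remark that the inversion used to pass from \eqref{eq:Stirling_Eulerian_typeA} to \eqref{Ank} in \cite{Bo} applies verbatim, with $S(n,r)$ replaced by $2^{r}r!\,S_B(n,r)/r!=2^{r}S_B(n,r)$ in the appropriate places.
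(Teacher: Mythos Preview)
Your proposal is correct and follows exactly the approach the paper indicates: the paper does not spell out a proof of Corollary~\ref{inverse_main_thm_star} at all, but merely says that it is obtained ``by inverting these formulas, similarly to the known equality in type $A$'' (i.e., the passage from \eqref{eq:Stirling_Eulerian_typeA} to \eqref{Ank}). Your argument is precisely that binomial inversion made explicit, and your final remark that the whole thing compresses to ``the inversion in \cite{Bo} applies verbatim'' is literally how the paper treats it.
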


\begin{cor}\label{inverse_main_thm_star_D} For all non-negative integers $n\geq k$, with $n\neq 1$, we have:
\begin{small}
\begin{eqnarray*}\label{eq_D} \nonumber
A_D(n,k)
= \left[ \sum\limits_{r=0}^k (-1)^{k-r} \cdot 2^{r}r! \cdot S_D(n,r) \cdot \binom{n-r}{k-r} \right] - n \cdot 2^{n-1} \cdot A(n-1,k-1).
\end{eqnarray*}
\end{small}
\end{cor}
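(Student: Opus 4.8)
The plan is to obtain Corollary~\ref{inverse_main_thm_star_D} from Theorem~\ref{main_thm_D} by the same matrix inversion that turns Theorem~\ref{thm:typeA} into Equation~\eqref{Ank}. Fix $n \geq 2$. Rewrite Theorem~\ref{main_thm_D} as
$$2^{r}r!\,S_D(n,r) - n\,2^{n-1}(r-1)!\,S(n-1,r-1) \;=\; \sum_{k=0}^{r} A_D(n,k)\binom{n-k}{r-k},$$
with the convention that the summand $(r-1)!\,S(n-1,r-1)$ is $0$ when $r=0$ (legitimate since $S(n-1,-1)=0$). Thus, setting $u_r$ equal to the left-hand side and $v_k := A_D(n,k)$, we have $u_r = \sum_{k} v_k\binom{n-k}{r-k}$ for all $0 \le r \le n$, and we must recover $(v_k)$.

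The key point is that, for fixed $n$, the triangular matrices with entries $\binom{n-k}{r-k}$ and $(-1)^{k-r}\binom{n-r}{k-r}$ are mutually inverse, so that $u_r = \sum_k v_k\binom{n-k}{r-k}$ implies $v_k = \sum_r (-1)^{k-r}u_r\binom{n-r}{k-r}$. This is exactly the inversion already invoked for~\eqref{Ank}; it follows from the subset-of-a-subset identity $\binom{N}{i}\binom{N-i}{m-i} = \binom{N}{m}\binom{m}{i}$ (applied with $N = n-r$, $m = k-r$) together with $\sum_{i=0}^{m}(-1)^{m-i}\binom{m}{i} = (1-1)^m$, which vanishes unless $m=0$. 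Applying this to the displayed identity gives
$$A_D(n,k) = \sum_{r=0}^{k}(-1)^{k-r}\,2^{r}r!\,S_D(n,r)\binom{n-r}{k-r} \;-\; n\,2^{n-1}\sum_{r=1}^{k}(-1)^{k-r}(r-1)!\,S(n-1,r-1)\binom{n-r}{k-r}.$$

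It remains to identify the last sum with $A(n-1,k-1)$: re-indexing $r = s+1$ rewrites it as $\sum_{s\geq 0}(-1)^{(k-1)-s}s!\,S(n-1,s)\binom{(n-1)-s}{(k-1)-s}$, which is the right-hand side of~\eqref{Ank} with $(n,k)$ replaced by $(n-1,k-1)$, hence equals $A(n-1,k-1)$. Substituting back yields the stated formula. I do not expect a genuine obstacle here: once the matrix inversion is in place, the argument is bookkeeping, the only subtle points being the vanishing of the $r=0$ Stirling term---which is precisely where the hypothesis $n\neq 1$ enters, beyond its role in Theorem~\ref{main_thm_D}---and keeping the index shift consistent with the conventions $S(m,0)=\delta_{m,0}$ and $\binom{a}{b}=0$ for $b<0$ used throughout.
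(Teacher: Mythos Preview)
Your proposal is correct and follows exactly the approach the paper indicates: the paper derives this corollary simply ``by inverting these formulas, similarly to the known equality in type $A$'' (i.e.\ Equation~\eqref{Ank}), and you carry out precisely that binomial inversion of Theorem~\ref{main_thm_D}, then recognize the residual sum as Equation~\eqref{Ank} with $(n,k)$ shifted to $(n-1,k-1)$. You supply the details the paper leaves implicit, including the verification that the $s=0$ term vanishes because $S(n-1,0)=0$ for $n\ge 2$.
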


\subsection{Proof for type $B$}

The proofs in this and in the next subsections use arguments similar to Bona's proof for the corresponding identity for type $A$, see  \cite[Theorem 1.17]{Bo}.

\begin{proof}[Proof of Theorem~\ref{main_thm_B}]\label{section4.1}

We have to prove the following equality:
$$ 2^{r}r!S_{B}(n,r)= \sum\limits_{k=0}^r {A_B(n,k){\binom{n-k}{r-k}}}.$$

The number $2^r r!S_B(n,r)$ in the left-hand side is the number of ordered $B_n$-partitions having $r$ pairs of nonzero blocks. Now, let us show that the right-hand side counts the same set of partitions in a different way.

Let $\be \in B_n$ be a signed permutation with $\des_B(\be)=k$, written in its window notation.  We start by adding a separator after each descent of $\beta$ and after $\be(n)$. If $0 \in \Des_B(\be)$, this means that a separator is added before $\be(1)$. If $r>k$, we add extra $r-k$ {\em artificial separators} in some of the $n-k$ empty spots, where by a {\em spot} we mean a gap between two consecutive entries of $\be$ or the gap before the first entry $\be(1)$.
This splits $\be$ into a set of $r$ blocks,
where the block $C_i$ is defined as the set of entries between the $i$th and the $(i+1)$th separators for $1 \leq i \leq r$.
Now, this set of blocks is transformed into the ordered $B_n$-partition with $r$ pairs of nonzero blocks:
$$[C_0,C_1,-C_1,\ldots,C_r,-C_r],$$
where the (optional) zero-block $C_0$ equals to $\{\pm \be(1),\ldots, \pm \be(j)\}$ if the first separator is after $\be(j)$, for $j \geq 1$, and it does not exist if the first separator is before $\be(1)$.

For example, if $\beta=[-2,3,5,1,-4] \in B_5$, then after adding the separators induced by descents, we get the sequence
$[ \ | \ -2,3,5\ | \ 1 \ | \ -4 \ | \ ]$, which is transformed into the ordered partition $[\{-2,3,5\},\{2,-3,-5\},\{1\},\{-1\},\{-4\},\{4\}]$.
On the other hand, if $\beta'=[2,3,5,-1,-4] \in B_5$, then after adding the separators induced by the descents, we have
$\beta'=[ \ 2,3,5 \ | \ -1 \ | \ -4 \ | \ ]$, which gives rise to the ordered partition $[\{ \pm 2, \pm 3,\pm 5 \},\{-1\},\{1\},\{-4\},\{4\}]$, with a zero-block, and two nonzero blocks.

There are exactly $\binom{n-k}{r-k}$ ordered $B_n$-partitions obtained from $\be$ in this way. From now on, we refer to this process of creating $B_n$-partitions starting from a single signed permutation $\be$, as the {\em $B$-procedure}.

It is easy to see that the $B$-procedure applied to different signed permutations produces disjoint sets of ordered $B_n$-partitions;  therefore,
one can create in this way $\sum_{k=0}^r {A_B(n,k){\binom{n-k}{r-k}}}$  distinct ordered $B_n$-partitions with $r$ pairs of nonzero blocks.

Let us show that any ordered $B_n$-partition $\lambda=[C_0,C_1,-C_1,\ldots,C_r,-C_r],$ can be obtained through the $B$-procedure.
If $\lambda$ contains a zero-block $C_0$, then put the positive elements of $C_0$ in increasing order at the beginning of a sequence $\mathbf{S}$, and add a separator after them. Then, order increasingly the elements in each of the blocks $C_1,\ldots,C_r$, and write them sequentially in $\mathbf{S}$ (after the first separator if exists), by adding a separator after the last entry coming from each block. Reading the formed sequence $\mathbf{S}$ from the left to the right, one obtains the window notation of a signed permutation $\be$. Note that the number of descents in $\be$ is smaller than or equal to $r$, since the elements in each block are ordered increasingly. Now, it is clear that $\lambda$ can be obtained by applying the $B$-procedure to $\be$, where the artificial separators are easily recovered.
\end{proof}

\begin{exa}
The signed permutation
$$\be=[ \ 1,4 \mid -5,-3,2 \ | \ ] \in B_5$$
has $2$ as a descent. It produces the following ordered $B_5$-partition with one pair of nonzero blocks
$$[ \{\pm 1,\pm 4\}, \{-5,-3,2\}, \{5,3,-2\}],$$
and exactly ${\binom{4}{1}}$ ordered $B_5$-partitions with two pairs of nonzero blocks, namely:
\begin{eqnarray*}
&[ \{1,4\},\{-1,-4\}, \{-5,-3,2\}, \{5,3,-2\}],\\
&[ \{\pm 1\},\{4\},\{-4\}, \{-5,-3,2\}, \{5,3,-2\}],\\
&[ \{ \pm 1, \pm 4\}, \{-5\}, \{5\},\{-3,2\}, \{3,-2\}], \\
&[ \{ \pm 1,\pm 4\}, \{-5,-3\},\{5,3\},\{2\}, \{-2\}],
\end{eqnarray*}
obtained by placing one artificial separator before entries  $1,2,4$ and $5$, respectively. The other ordered partitions coming from $\be$ with more blocks   are obtained similarly.

\smallskip

Conversely, let
$$\lambda=[\{ \pm 1, \pm 4\}, \{5\},\{-5\}, \{-3,2\}, \{3,-2\}]$$
be an ordered $B_5$-partition. The corresponding signed permutation with the added separators is
$\be=[ \ 1,4  \parallel 5 \ | -  3,2 \ |  \ ] \in B_5$.
Note that although $C_1=\{5\}$ is a separate block, there is no descent between $4$ and $5$, meaning that $\lambda$ is obtained by adding an artificial separator in the spot between these two entries, denoted $\|$.
\end{exa}

\subsection{Proof for type $D$}

The proof of Theorem \ref{main_thm_D} is a bit more tricky. The basic idea is the same as before: obtaining the whole set of ordered $D_n$-partitions with $r$ pairs of nonzero blocks from elements in $D_n$ with at most $r$ descents. We will use the $B$-procedure presented in the previous subsection, with the addition of an extra step, to take care of the special structure of the $D_n$-partitions.
First of all, we recall that we might have $\Des_D(\g) \neq \Des_B(\g)$ for $\g \in D_n$, see an example at the end of Section \ref{Eulerian numbers}.

\medskip

Let $\g \in D_n$ be such that $\des_D(\g)=k$. We start by adding the separators after the $D$-descents of $\g$ and the artificial ones in case that $k<r$. Using the $B$-procedure, we transform $\g$, equipped with the set of separators, into a $B_n$-partition. The result is also a $D_n$-partition, except in the case when there is a separator (either induced by a $D$-descent or by an artificial addition) between $\g(1)$ and $\g(2)$, but not before $\g(1)$. In fact, in this case, we obtain an ordered $B_n$-partition with a zero-block containing exactly one pair of elements, which is not a $D_n$-partition.

Hence, only in this case, we slightly modify the algorithm as follows. First we toggle the sign of $\g(1)$ and move the separator from after $\g(1)$ to before it. We call this action the {\em switch operation}.
Then, we transform this new sequence of entries and separators into a $D_n$-partition by applying the $B$-procedure. We refer to this process of associating a permutation $\g \in D_n$ with the obtained set of ordered $D_n$-partitions, as the {\em $D$-procedure}.

\medskip

Before proving that this procedure indeed creates ordered $D_n$-partitions, we give an example of an element $\g \in D_n$, for which the application of the switch operation is required.

\begin{exa}
Let $\g=[ \ -1 \ \| \ 3, 4 \mid -2 \mid -6,-5 \mid \ ] \in D_6$ be a permutation equipped with the separators induced by the $D$-descents and one artificial separator added after position $1$. The $B$-procedure, applied to $\g$, results in an illegal ordered $D_6$-partition, since the zero-block $B_0=\{\pm 1\}$ consists of only one pair. Toggling the sign of $\g(1)$, and moving the artificial separator before position $1$, we obtain:
$$\g'=[ \ \| \ 1,3,4 \mid -2 \mid -6,-5\mid \ ] \in B_6\setminus D_6,$$
that is transformed into the ordered $D_6$-partition:
$$[\{1,3,4\},\{-1,-3,-4\},\{-2\},\{2\},\{-6,-5\},\{6,5\}].$$
\end{exa}

\medskip

As in type $B$, it is easy to see that by applying the $D$-procedure to all the permutations in $D_n$, we obtain disjoint sets of ordered $D_n$-partitions, though, in this case we do not obtain all of them.
The next lemma specifies exactly which $D_n$- partitions are not reached:

\begin{lem}\label{structure of odd partitions}
The ordered $D_n$-partitions with $r$ pairs of nonzero blocks, which cannot be obtained by the $D$-procedure are exactly those of the form
\begin{equation}\label{odd_partitions}
\lambda = [C_1=\{*\}, -C_1=-\{*\}, C_2, -C_2,\ldots,C_r,-C_r],
\end{equation}
where $*$ stands for a single element of $[\pm n]$, and such that the total number
of negative entries in the blocks $C_1=\{*\},C_2, \dots, C_r$ is odd.
\end{lem}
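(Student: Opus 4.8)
The plan is to characterize precisely the image of the $D$-procedure among all ordered $D_n$-partitions with $r$ pairs of nonzero blocks, and to show the complement is exactly the set described in \eqref{odd_partitions}. Recall from the $B$-procedure that, given an ordered $D_n$-partition $\lambda = [C_0, C_1, -C_1, \dots, C_r, -C_r]$ (with $C_0$ optional), one recovers a candidate preimage by writing the positive part of $C_0$ in increasing order, then each $C_i$ in increasing order, inserting separators after each block; the resulting window is a signed permutation $\g$, and $\lambda$ is in the image iff this $\g$ actually lies in $D_n$ (even number of negative entries) and, in the case where the switch operation would have been triggered, $\lambda$ is what the switch produces rather than what the raw $B$-procedure produces.

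First I would dispose of the case where $\lambda$ has a zero-block $C_0$: then the preimage $\g$ produced as above has its window equal to (positive entries of $C_0$, increasing) followed by the entries of $C_1, \dots, C_r$; whether this $\g$ has an even or odd number of negatives is forced by $\lambda$ itself, and when it is odd one checks that the switch operation applied to the corresponding even-signed permutation recovers $\lambda$ — so every $\lambda$ with a zero-block is reached. (Here one uses that a $D_n$-partition's zero-block has at least two positive elements, so the zero-block is never of the form $\{\pm *\}$; this is exactly the obstruction that the switch operation was designed to route around.) Next I would treat the case with no zero-block, so $\lambda = [C_1, -C_1, \dots, C_r, -C_r]$. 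The candidate preimage $\g$ has window equal to the concatenation of $C_1, \dots, C_r$ each sorted increasingly. If the total number of negative entries among $C_1, \dots, C_r$ is even, then $\g \in D_n$, the $B$-procedure on $\g$ (with the appropriate artificial separators) already yields $\lambda$ with no switch needed, and we are done. If that total is odd, then $\g \notin D_n$; the only way $\lambda$ could still be in the image is as the output of a switch operation, and the switch output always has its first block equal to a singleton $\{*\}$ (it is $C_0$ after toggling $\g(1)$ and moving the separator, but in the no-zero-block output of the switch the block right after the leading separator came from a single toggled entry) — wait, more carefully: the switch produces a $D_n$-partition whose \emph{first} block $C_1$ is the image of the single entry $\g(1)$ with its sign toggled, hence $|C_1| = 1$. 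So $\lambda$ with an odd count is reachable only if $C_1 = \{*\}$, and conversely when $C_1 = \{*\}$ one writes the even-signed permutation $\g'$ whose window is $(-*, \text{sorted } C_2, \dots, \text{sorted } C_r)$ — here $-*$ has sign opposite to $*$, so $\g'$ has a total parity differing by one from that of $\lambda$, i.e.\ $\g'$ has an \emph{even} number of negatives precisely when the count in $\lambda$ is odd — and the $D$-procedure applied to $\g'$, with a separator between $\g'(1)$ and $\g'(2)$, triggers the switch and recovers $\lambda$. Thus among no-zero-block partitions with an odd negative count, exactly those with $C_1 = \{*\}$ a singleton are reachable, so the unreachable ones are exactly those of the form \eqref{odd_partitions}.

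The bookkeeping I expect to be the main obstacle is getting the parity accounting exactly right across the switch: the switch toggles one sign, so it changes the parity of the number of negative window entries by one, and one must track carefully that the permutation fed into the $D$-procedure lies in $D_n$ (even number of negatives) while the $D_n$-partition it produces has the \emph{opposite} parity of negative entries in its nonzero blocks $C_1, \dots, C_r$. A secondary subtlety is verifying that when $C_1 = \{*\}$ is a singleton there is genuinely no \emph{other} preimage (e.g.\ one without a switch) that would make $\lambda$ reachable anyway — this follows because a switch-free $B$-procedure output with no zero-block comes from a $\g \in D_n$, which has an even negative count, contradicting oddness; so the only route is via the switch, and that route is available iff $C_1$ is a singleton. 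Assembling these cases gives the lemma.
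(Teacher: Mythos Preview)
Your proposal has a genuine error in how you read the switch operation, and it propagates through both cases. The switch takes a configuration with a separator \emph{after} $\g(1)$ but none before it, toggles the sign of $\g(1)$, and moves that separator to \emph{before} $\g(1)$. Hence after the switch the first nonzero block $C_1$ contains the toggled $\g(1)$ \emph{together with} $\g(2)$ (and possibly more, up to the next separator), so $|C_1|\ge 2$, not $|C_1|=1$. Your sentence ``the switch produces a $D_n$-partition whose first block $C_1$ is the image of the single entry $\g(1)$ with its sign toggled, hence $|C_1|=1$'' is exactly backwards, and so is your conclusion that among odd no-zero-block partitions ``exactly those with $C_1=\{*\}$ a singleton are reachable.'' The correct statement is the opposite: the switch can reach precisely the odd no-zero-block partitions with $|C_1|\ge 2$, which is why the singleton-first-block odd partitions of \eqref{odd_partitions} are the unreachable ones.

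The zero-block case is also mishandled. You write that when the candidate $\g$ has odd negative count ``the switch operation applied to the corresponding even-signed permutation recovers $\lambda$,'' but the switch always outputs a partition \emph{without} a zero-block, so it cannot recover any $\lambda$ with $C_0\neq\varnothing$. The paper's fix is different: since a $D_n$-zero-block has at least two positive elements, one negates the smallest positive element of $C_0$ when forming $\g$; this makes $\g\in D_n$, forces $|\g(1)|<\g(2)$ so that $0\notin\Des_D(\g)$, and the $D$-procedure (with no switch) then returns $\lambda$ with its zero-block intact. Once you correct the direction of the switch and replace the zero-block argument with this sign-flip inside $C_0$, your case split matches the paper's and the lemma follows.
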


\begin{proof}
First of all, we remark that when the $D$-procedure is applied to a permutation (equipped with separators) without the use of the switch operation, it produces ordered $D_n$-partitions $[C_0, C_1,-C_1,\dots, C_r,-C_r]$ with an even number of negative entries in the union $C_1 \cup C_2 \cup \cdots \cup C_r$. Let us call an ordered  $D_n$-partition {\em even} ({\it resp.} {\em odd}) if it satisfies ({\it resp}. does not satisfy) the latter condition.

In contrast, if the switch operation is applied, only odd partitions of the form $[C_1,-C_1,\dots, C_r,-C_r]$ without a zero-block are obtained, and the first block $C_1$ contains at least the two entries $\g(1)$ and $\g(2)$.

From this it follows that the partitions in \eqref{odd_partitions} cannot be reached.

Let us show, that all  other ordered $D_n$-partitions can be obtained using the  $D$-procedure.

Let $\lambda=[C_0,C_1,-C_1,\dots,C_r,-C_r]$ be an ordered $D_n$-partition with a non-empty zero-block $C_0$.
We look for the preimage $\g \in D_n$ of $\lambda$. Since the switch operation on a permutation $\g \in D_n$  produces $D_n$-partitions without a zero-block, in our case the switch operation has not been applied to $\g$.

We start by defining a sequence ${\mathbf S}$ as follows: we first put the positive entries of $C_0$ in their natural increasing order as the first elements of  $\mathbf{S}$, followed by a separator. If $\lambda$ is odd, we change the sign of the first entry of $\mathbf{S}$ to be negative.
Now, as described in the proof of Theorem~\ref{main_thm_B}, we complete $\mathbf{S}$ by concatenating the $r$ sequences composed by the elements of the blocks $C_1,\ldots,C_r$, where in each block the elements are ordered increasingly and followed by a separator.
We now consider the obtained sequence $\mathbf{S}$ as a permutation $\g \in D_n$. Note that $0 \notin {\rm Des}_D(\g)$, since by construction $|\g(1)| < \g(2)$ and so $\g(1) + \g(2) > 0$. Moreover, it is clear that applying the $D$-procedure to $\g$ yields the partition $\lambda$.

\smallskip

Now assume that $\lambda=[C_1,-C_1,\dots,C_r,-C_r]$ is an ordered $D_n$-partition without a zero-block.

If $\lambda$ is even, it is easy to see that the above construction without the initial step of reordering $C_0$, produces  $\g \in D_n$ which is the preimage of $\lambda$.

Finally, if $\lambda$ is odd and is not listed in Equation (\ref{odd_partitions})
it means that the first block $C_1$ has at least two elements,
and that the switch operation is necessary (due to the parity).
As before, we define a sequence $\mathbf{S}$ by reordering increasingly all the blocks $C_i$. Since $C_1$ has at least two elements, we have that  $\mathbf{S}(1)<\mathbf{S}(2)$.
Since the partition is odd with no zero-block, we have applied a switch operation on its preimage. Therefore, the sign of $\mathbf{S}(1)$ is negative. Now consider $\mathbf{S}$  as a permutation $\gamma \in D_n$. It is easy to see that the obtained permutation $\gamma \in D_n$ is indeed the preimage of $\lambda$.
\end{proof}

We give now two examples of the reverse procedure: both examples are ordered odd $D_5$-partitions, but one has a zero-block, while the other has no zero-block, so the latter requires the switch operation.

\begin{exa}\label{example for recovery}
(a) Let
$$\lambda_1=\left[ C_0=\{ \pm 1, \pm 4\}, \{3\}, \{-3\}, \{-5,2\}, \{5,-2\}  \right]$$
be an ordered odd $D_5$-partition with a zero-block $C_0$ which is odd since we have one negative sign in $\{3\}\cup \{-5,2\}$.
For recovering its preimage $\g_1 \in D_5$, we choose the negative sign for the smallest positive entry in the zero-block, which is $1$. After inserting the other positive entry of $C_0$ and a separator, we insert the other blocks, where each block is ordered increasingly followed by a separator, to obtain the permutation:
$$\g_1=[ \ -1,4\ |\ 3 \ | -5,2\ | \ ] \in D_5,$$
which is the preimage of the partition $\lambda_1$ using the $D$-procedure.

\medskip

\noindent
(b) Let $$\lambda_2= \left[ \{ -4, 3 \},\{4,-3\},\{2\},\{-2\},\{-5,-1\},\{5,1\} \right]$$
be an ordered odd $D_5$-partition without a zero-block. Hence, it is created by the switch operation. First, by the standard reverse procedure, we get the element:
$$\g_2'=\left[\ | -4,3 \ |\  2 \ | \ -5,-1 \ | \ \right] \in B_5 \setminus D_5.$$
Then, after performing the toggling of the sign of the first digit, we obtain:
$$\g_2=[ \ 4 \mid 3 \mid 2 \mid -5,-1 \ ]\in D_5,$$ that is the permutation from which the partition $\lambda_2$ is obtained.
Note that in this case, artificial separators are not needed.
\end{exa}

We can now complete the proof of Theorem~\ref{main_thm_D}.
\begin{proof}[Proof of Theorem ~\ref{main_thm_D}]
The equation in the statement of the theorem is equivalent to the following:
$$2^{r}r! S_D(n,r) = \sum\limits_{k=0}^r {A_D(n,k){\binom{n-k}{r-k}}}+  n \cdot 2^{n-1}(r-1)! \cdot S(n-1,r-1).$$

The left-hand side of the above equation counts the number of ordered $D_n$-partitions with $r$ pairs of nonzero blocks.
The right-hand side counts the same set of partitions divided in two categories: those coming from the $D$-procedure, that are  induced by permutations counted in $A_D(n,k)$, and those that are not, which are listed in Lemma \ref{structure of odd partitions}.
It is easy to see that the latter can be enumerated in the following way: first choose the absolute value of the unique element in $C_1=\{ * \}$, which can be done in $n$ ways. Then, choose and order the $r-1$ remaining blocks, which can be done in $(r-1)! \cdot S(n-1,r-1)$ ways. Finally, choose the sign of each entry in the blocks $C_1,C_2,\dots, C_r$, in such a way that an odd number of entries will be signed, and this can be done in $2^{n-1}$ ways.
This completes the proof.
\end{proof}

\section{Falling factorials for Coxeter groups \\of types $B$ and $D$}\label{Falling polynomials for Coxeter groups}

In this section, we present generalizations of Theorem \ref{thm:typeA_falling} for Coxeter groups of types $B$ and $D$ and provide combinatorial proofs for them.

\subsection{Type $B$}
The following theorem is a natural generalization of Theorem~\ref{thm:typeA_falling} for the Stirling numbers of type $B$, and it is a particular case of an identity appearing in Bala ~\cite{Bala}, where the numbers $S_B(n,k)$ correspond to the sequence denoted there by $S_{(2,0,1)}$. Bala uses generating functions techniques for proving this identity.

\begin{thm}[Bala]\label{thm_bala}
Let $x \in \mathbb{R}$ and let $n \in \mathbb{N}$. Then we have
\begin{equation}\label{B}
x^n=\sum\limits_{k=0}^n{S_B(n,k)[x]^B_k},
\end{equation}
where $[x]^B_k:=(x-1)(x-3)\cdots (x-2k+1)$ and $[x]^B_0:=1$.
\end{thm}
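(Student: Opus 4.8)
The plan is to give a combinatorial proof mimicking the standard ``counting functions'' argument for Theorem~\ref{thm:typeA_falling}, but interpreting $x^n$ and the falling-factorial-type polynomials $[x]^B_k$ in terms of colorings compatible with the $B_n$-partition structure. Since both sides of \eqref{B} are polynomials in $x$, it suffices to verify the identity for infinitely many values of $x$; I will take $x$ to be a positive odd integer, say $x = 2t+1$, and show that both sides count the same set of objects.

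First I would fix the combinatorial model. Think of $x = 2t+1$ as the number of ``available values'' indexed by $\{0, \pm 1, \ldots, \pm t\}$, a set closed under negation with a distinguished fixed point $0$. The left-hand side $x^n$ counts all functions $f\colon [n] \to \{0,\pm 1,\ldots,\pm t\}$. Extend any such $f$ to a ``signed'' function $\hat f$ on $[\pm n]$ by the rule $\hat f(-i) = -\hat f(i)$; this is exactly the kind of symmetry built into the definition of $B_n$-partitions. Then I would partition the set of all functions $f$ according to the $B_n$-partition of $[\pm n]$ that records the fibers of $\hat f$: elements $i,j \in [\pm n]$ go in the same block iff $\hat f(i) = \hat f(j)$, with the preimage of $0$ forming the zero-block (which is indeed of the form $\{\pm i : i \in S\}$, and the non-zero blocks come in $\pm$ pairs $C, -C$). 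So the $f$'s with a prescribed underlying $B_n$-partition having exactly $k$ pairs of non-zero blocks are counted by the number of ways to injectively and $\pm$-equivariantly assign values: the zero-block (if present) must get $0$, and the $k$ pairs $\{C_1,-C_1\},\ldots,\{C_k,-C_k\}$ must be assigned $k$ distinct unordered pairs $\{v,-v\}$ with $v \in \{\pm 1,\ldots,\pm t\}$, together with a choice of which of $C_i, -C_i$ gets $v$ versus $-v$. The number of such equivariant injections is $2^k \cdot t(t-1)\cdots(t-k+1) = (2t)(2t-2)\cdots(2t-2k+2) = (x-1)(x-3)\cdots(x-2k+1) = [x]^B_k$, where I used $x = 2t+1$ so that $2t-2j = x-1-2j = x-(2j+1)$. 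Summing over all $B_n$-partitions, grouped by the number $k$ of non-zero block pairs, gives $x^n = \sum_{k=0}^n S_B(n,k)\,[x]^B_k$ for every odd positive integer $x$, hence as a polynomial identity.

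The step that needs the most care is the bookkeeping in the equivariant-injection count: one must check that the zero-block really is forced to map to $0$ (it is the unique block with $-C = C$, and $0$ is the unique value with $-v = v$), that distinct non-zero block pairs must receive distinct value pairs (this is what makes it a ``falling factorial''), and that the factor $2^k$ from choosing the sign within each pair is correct and does not double-count (it does not, because $C_i \ne -C_i$, so the two sign choices give genuinely different functions $f$). I would also note the boundary behavior: when $k > t$ there are no valid injections, consistent with $[x]^B_k$ vanishing at $x = 2t+1$ for $k > t$, so the sum effectively terminates and there is no convergence issue. An alternative, if one prefers to avoid the case analysis for the zero-block, is to prove the identity by induction on $n$ using a Stirling-type recurrence $S_B(n+1,k) = S_B(n,k-1) + (2k+1)S_B(n,k)$ (which itself has a short combinatorial proof: the block pair containing $\pm(n+1)$, or the zero-block, either is new or absorbs $\pm(n+1)$ in one of $2k+1$ ways), matched against the polynomial recurrence $x^{n+1} = x \cdot x^n$ after expressing $x \cdot [x]^B_k$ in the basis $\{[x]^B_j\}$; but the direct counting argument above is cleaner and I would present that one.
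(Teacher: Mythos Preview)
Your argument is correct and is essentially the paper's proof: both restrict to odd $x=2t+1$, interpret $x^n$ as the number of tuples in $\{-t,\ldots,0,\ldots,t\}^n$, group these tuples by the $B_n$-partition recording which coordinates are equal, negatives of each other, or zero, and then count the tuples inducing a fixed partition with $k$ nonzero block pairs as $(x-1)(x-3)\cdots(x-2k+1)$. The only difference is vocabulary---the paper phrases the tuples as lattice points in a cube and the grouping as ``the maximal intersection of hyperplanes in $\mathcal{B}_n$ containing the point'' (following Reiner and Blass--Sagan), whereas you phrase them as functions $f\colon[n]\to\{0,\pm1,\ldots,\pm t\}$ and group by the fiber partition of the equivariant extension $\hat f$---but the underlying bijection and count are identical.
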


A combinatorial interpretation of $S_{B}(n,k)$ using the model of $k$-attacking rooks was given by Remmel and Wachs \cite{RW} (specifically, this is $S_{n,k}^{0,2}(1,1)$ in their notation).
More information on the rook interpretation of this and other factorization theorems can be found in Miceli and Remmel \cite{MR}.

Here we provide a kind of a geometric proof, suggested to us by Reiner, which is related to a method used by Blass and Sagan~\cite{BS} to compute the characteristic polynomial of the poset $L(\mathcal{B}_n)$.

\begin{proof}
Being a polynomial identity, it is sufficient to prove it only for odd integers $x=2m+1$ where $m \in \mathbb{N}$.

The left-hand side of Equation (\ref{B}) counts the number of lattice points in the $n$-dimensional cube $\{-m,-m+1,\dots,-1,0,1,\dots,m\}^n$.  We show that the right-hand side of Equation (\ref{B}) counts the same set of points using the maximal intersection subsets of hyperplanes the points lay on.

More precisely, let $\lambda=\{C_0,\pm C_1,\dots,\pm C_k\}$ be a $B_n$-partition with $k$ pairs of nonzero blocks, with
$0\leq k \leq n$. We associate to this partition the set of lattice points of the form $(x_1,\dots,x_n)$, where $x_j=0$ for all $j \in C_0$, and $x_{j_1}= x_{j_2}\neq 0$ ({\em resp.} $x_ {j_1}=-x_{j_2}\neq 0$) whenever $j_1,j_2$ ({\em resp.} $j_1,-j_2$) belong to the same block $C_i$ ({\em resp.} $-C_i$).

For the first pair of nonzero blocks $\pm C_1$ of the set partition $\lambda$, if $j_1 \in C_1 \cup -C_1$ then  there are $x-1$ possibilities (excluding the value $0$) to choose the value of $x_{j_1}$. For the second pair of blocks $\pm C_2$ of the partition $\lambda$, we have $x-3$ possibilities (excluding the value $0$ and the value $x_{j_1}$ chosen for $\pm C_1$ and its negative). We continue in this way until we get $x-(2k-3)$ possibilities for the last pair of blocks $\pm C_k$.

In particular, for $k=0$, $\lambda$ consists of only the zero-block $\{\pm 1,\dots,\pm n\}$, and is associated with the single lattice point $(0,\dots,0)$; for $k=n$, the only $B_n$-partition having $n$ pairs of nonzero blocks is
$$\{ \pm\{1\},\dots,\pm\{n\}\}$$
which corresponds to the lattice points $(x_1,\dots,x_n)$ such that
 $x_i \neq \pm x_j\neq 0$ for all $i\neq j$.
 Note that these are the $(x-1)(x-3)\cdots (x-(2n-1))$ lattice points that do not lie on any hyperplane.
\end{proof}

\begin{exa}
Let $n=2$ and $m=3$, so we have that $x=2m+1=7$. The lattice $([-3,3] \times [-3,3]) \cap \mathbb{Z}^2$ is presented in Figure \ref{fig_typeB}.

\begin{figure}[!ht]
    \centering
    \includegraphics[scale=0.5]{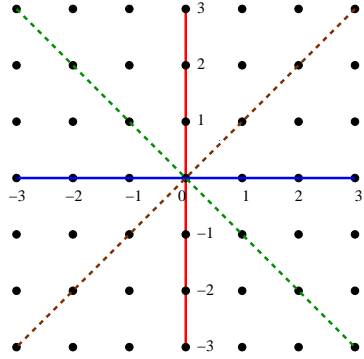}
    \caption{Lattice points for type $B$}
    \label{fig_typeB}
\end{figure}

For $k=0$, we have exactly one $B_2$-partition $\lambda_0$ consisting only of the zero-block: $\lambda_0=\{\{\pm 1,\pm 2\}\}$. The corresponding subspace is $\{x_1=x_2=0\}$, which counts only the lattice point $(0,0)$.

For $k=1$, we have four $B_2$-partitions, two of them contain a zero-block:
$$\lambda_1 = \{\{\pm 1\}, \pm\{2\} \}; \qquad \lambda_2 = \{\{\pm 2\},\pm\{1\} \},$$
and two of them do not:
$$\lambda_3 = \{ \pm\{1,2\} \}; \qquad \lambda_4 = \{ \pm\{1,-2\} \}.$$
The partitions $\lambda_1$ and $\lambda_2$ correspond to the axes $x_1=0$ and $x_2=0$, respectively.
The second pair $\lambda_3$ and $\lambda_4$ corresponds to the diagonals $x_1=x_2$ and $x_1=-x_2$ respectively. Each of these hyperplanes contains $6$ lattice points (since the origin is excluded).

For $k=2$, the single $B_2$-partition:
$$\lambda_5 = \{ \pm \{1\},\pm\{2\}\}$$
corresponds to the set of lattice points $(x_1,x_2)$ with $x_1\neq \pm x_2 \neq 0$, which are those not lying on any hyperplane.
\end{exa}

\begin{rem}
Note that Blass and Sagan \cite[Theorem 2.1]{BS} show that, when $x$ is an odd number, the cardinality of the set of lattice points not lying on any hyperplane is counted by the characteristic polynomial $\chi(\mathcal{B}_n,x)$ of the lattice $L(\mathcal{B}_n)$.
\end{rem}

\subsection{Type $D$}
The {\em falling factorial in type} $D$ is defined as follows: (see \cite{BS})
$$[x]_k^D:=\left\{ \begin{array}{ll}
1, & k=0 ;\\
(x-1)(x-3)\cdots (x-(2k-1)), & 1 \leq k <n ;\\
(x-1)(x-3)\cdots (x-(2n-3))(x-(n-1)),& k=n.\end{array}\right.$$

We have found no generalization of Equation (\ref{reg}) for type $D$ in the literature, so we supply one here.

\begin{thm}\label{thm_bala_D}
For all $n \in \mathbb{N}$ and $x \in \mathbb{R}$:
\begin{equation}\label{D}
x^n=\sum\limits_{k=0}^n{S_D(n,k)[x]_k^D} + n \left((x-1)^{n-1} -[x]_{n-1}^D\right).
\end{equation}
\end{thm}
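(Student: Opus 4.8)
The plan is to mimic the geometric proof of Theorem~\ref{thm_bala} as closely as possible, counting lattice points in the cube $\{-m,\dots,m\}^n$ (with $x=2m+1$) according to the maximal intersection subspace they lie on, but now using $D_n$-partitions instead of $B_n$-partitions. Since Equation~\eqref{D} is a polynomial identity in $x$, it suffices to verify it for all odd positive integers $x=2m+1$. The left-hand side $x^n$ counts all lattice points of the cube. I would associate to each $D_n$-partition $\lambda=\{C_0,\pm C_1,\dots,\pm C_k\}$ the set of lattice points $(x_1,\dots,x_n)$ with $x_j=0$ for $j\in C_0$, and $x_{j_1}=x_{j_2}\neq 0$ (resp. $x_{j_1}=-x_{j_2}\neq 0$) whenever $j_1,j_2$ (resp. $j_1,-j_2$) lie in a common block $C_i$ (resp. $-C_i$), exactly as in the type $B$ proof.

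The key point to isolate is \emph{where the type $D$ count differs from the type $B$ count}. A $D_n$-partition is just a $B_n$-partition whose zero-block, if present, contains at least two positive elements; equivalently, the forbidden configurations are the $B_n$-partitions whose zero-block is a single pair $\{\pm i\}$. For any $D_n$-partition with $k$ pairs of nonzero blocks and a zero-block of size $\geq 4$ (or with $k<n$ and no zero-block), the multiplicative count of lattice points is $(x-1)(x-3)\cdots(x-(2k-1)) = [x]_k^D$, reproducing the first $n-1$ terms of the type $B$ computation verbatim. The subtlety is only at $k=n$: the unique $D_n$-partition with $n$ pairs of nonzero blocks is $\{\pm\{1\},\dots,\pm\{n\}\}$, and its lattice points are those with $x_i\neq\pm x_j$ for all $i\neq j$ but where, crucially, the values $x_i$ \emph{are} allowed to be $0$ (since there is no zero-block available to absorb a zero coordinate). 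Hence this block contributes $[x]_n^D = (x-1)(x-3)\cdots(x-(2n-3))(x-(n-1))$ rather than the type $B$ quantity: the last factor $x-(2n-1)$ is replaced by $x-(n-1)$, accounting for the extra freedom of one coordinate being zero. So $\sum_{k=0}^n S_D(n,k)[x]_k^D$ counts all lattice points \emph{except} those lying on a hyperplane $\{x_i=0\}$ but not realized by any legal $D_n$-partition — namely the points where exactly one coordinate $x_i$ equals $0$ and the remaining coordinates are pairwise distinct in absolute value and nonzero.

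It then remains to count precisely that deficit and match it with the correction term $n\big((x-1)^{n-1}-[x]_{n-1}^D\big)$. I would argue: choose the index $i$ with $x_i=0$ in $n$ ways; then the remaining $n-1$ coordinates must be a point of $\{-m,\dots,m\}^{n-1}$ avoiding the origin-coordinate-zero only insofar as needed — more carefully, a point with all coordinates nonzero and pairwise distinct in absolute value gives $[x]_{n-1}^D$ choices (this is the $k=n-1$ generic count, which for type $D$ coincides with the type $B$ one since $n-1<n$), while $(x-1)^{n-1}$ counts arbitrary points with all $n-1$ coordinates nonzero. So $(x-1)^{n-1}-[x]_{n-1}^D$ counts points with all coordinates nonzero but with \emph{some} coincidence $x_j=\pm x_l$ among them; together with $x_i=0$, these are exactly the lattice points that have been lost, because a point with $x_i=0$ and a coincidence among the others, or with two or more zero coordinates, is already correctly accounted for by a $D_n$-partition with a zero-block of size $\geq 2$. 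The main obstacle will be making this last inclusion–exclusion bookkeeping airtight: one must check that every lattice point of the cube is counted with the correct multiplicity on the right-hand side, i.e.\ that the only discrepancy between the naive $B$-type tally and the $D$-type tally is concentrated exactly in the set ``one coordinate zero, the rest nonzero with a pairwise coincidence in absolute value,'' and that this set is counted once by $n\big((x-1)^{n-1}-[x]_{n-1}^D\big)$ and zero times by $\sum_k S_D(n,k)[x]_k^D$. I would handle this by a clean case analysis on the number of zero coordinates of a lattice point (zero, one, or at least two) and, within the ``exactly one zero'' case, on whether the nonzero coordinates are generic; each case is matched to the unique term on the right-hand side that accounts for it.
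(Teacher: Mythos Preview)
Your approach is the paper's: reduce to odd $x=2m+1$, count lattice points of $\{-m,\dots,m\}^n$ via the $D_n$-partition encoding their coincidence pattern, handle $k<n$ verbatim as in type $B$, and for $k=n$ let the unique partition $\{\pm\{1\},\dots,\pm\{n\}\}$ absorb all points with pairwise distinct absolute values (including those with a single zero coordinate), giving $[x]_n^D$.

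However, your description of the deficit is internally inconsistent. At the end of your second paragraph you state that the uncounted points are those with exactly one coordinate zero and the remaining coordinates \emph{pairwise distinct} in absolute value --- but those are precisely the points you have just assigned to the $k=n$ partition. The actual uncounted set (which the paper identifies, and which your third paragraph then correctly enumerates as $n\big((x-1)^{n-1}-[x]_{n-1}^D\big)$) is the opposite: exactly one zero coordinate and \emph{some} coincidence $|x_j|=|x_l|$ among the remaining nonzero ones. Your ``because'' clause compounds the confusion by asserting that a point with one zero and a coincidence among the others is ``already correctly accounted for by a $D_n$-partition with a zero-block of size $\geq 2$'': it is not, since a single zero coordinate cannot form a legal $D$-zero-block, and that is exactly why such points are missing. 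Once you swap those two misstatements, your proposed case analysis on the number of zero coordinates is the paper's proof.
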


\begin{proof}
For $D_n$-partitions having $0 \leq k< n$ pairs of nonzero blocks the proof goes verbatim as in type $B$, so let $k=n$.

In this case, we have only one possible $D_n$-partition having $n$ pairs of nonzero blocks:  $\{\pm\{1\}, \dots, \pm\{n\} \}$.
We associate this $D_n$-partition with the lattice points of the form $(x_1,\dots,x_n)$ such that $x_i \neq \pm x_j$ for $i \neq j$, having at most one appearance of the value $0$.
Note that the points with exactly one appearance of $0$ cannot be obtained by any $D_n$-partition having $k<n$ blocks, since the zero-block cannot consist of exactly one pair.
If $0$ does appear, then we have to place it in one of the $n$ coordinates and then we are left with $(x-1)(x-3)\cdots (x-(2n-3))$ possibilities for the rest, while if $0$ does not exist, then we have $(x-1)(x-3)\cdots (x-(2n-1))$ possibilities. These two values sum up to a total of $$[x]_n^D=(x-1)(x-3)\cdots (x-(2n-3))(x-(n-1)).$$
As in type $B$, this number is equal to the evaluation of the characteristic polynomial $\chi(\mathcal{D}_n,x)$ of $L(\mathcal{D}_n)$, where $x$ is odd.

Note that during the above process of collecting lattice points of the $n$-dimensional cube, the points containing exactly one appearance of $0$ and at least two nonzero coordinates are assigned the same absolute value are not counted, since the zero-block (if exists) must contain at least two elements.  This phenomenon happens when $n>2$, and the number of such points is $n((x-1)^{n-1} - [x]_{n-1}^D)$. This concludes the proof.\end{proof}

\begin{exa}
As in the previous example, let $n=2$ and $m=3$, so we have: $x=2m+1=7$. The lattice $([-3,3] \times [-3,3]) \cap \mathbb{Z}^2$ is presented in Figure \ref{fig_typeD}.

\medskip

\begin{figure}[!ht]
    \centering
    \includegraphics[scale=0.5]{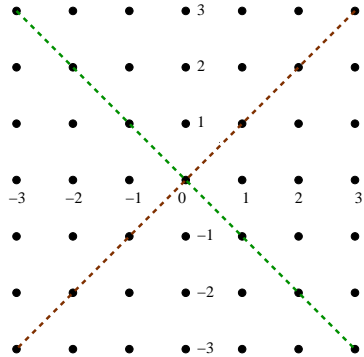}
    \caption{Lattice points for type $D$}
    \label{fig_typeD}
\end{figure}

\medskip

For $k=0$, as in type $B$ we have exactly one $D_2$-partition
$\lambda_0=\{\{\pm 1,\pm 2\}\}$ which counts only the lattice point $(0,0)$.
For $k=1$, we have only two $D_2$-partitions:
$\{\pm\{1,2\} \}$ and $\{\pm\{1,-2\} \}$, which correspond, as in the previous example,
to the diagonals  $x_1=x_2$ and $x_1=-x_2$ (without the origin), respectively

For $k=2$, as before, there is a single $D_n$-partition with two pairs of nonzero blocks:
$ \lambda = \{ \pm \{1\},\pm \{2\}\}.$
The lattice points corresponding to this set partition are those with different values in their coordinates, i.e. $x_1\neq x_2$, but in the case of type $D$ (in contrast to type $B$) the value $0$ can also appear. In the figure, these are all the lattice points which do not lie on the diagonals.

Note that in the case $n=2$ the second term in Equation \eqref{D} is $0$ and hence does not count any missing lattice points, since we have already counted all the points.  The missing points start to appear from $n=3$, as presented in the next example.
\end{exa}

\begin{exa}
Let  $n=m=3$, so that $x=2m+1=7$. The lattice points which are not counted have the form $(x_1,x_2,x_3)$, such that exactly one of their coordinates is $0$ and the other two share the same absolute value, e.g. the lattice points $(0,2,2)$ and $(0,2,-2)$ are not counted. In this case, the number of such missing lattice points (which is the first summand in the right-hand side of Equation \eqref{D}) is: $3\cdot 6^2-3\cdot 6 \cdot 4 =36$.
\end{exa}

\section{Some generalizations} \label{Possible generalizations}
In this section, we present some generalizations and variants related to our main results in some different directions. In Section \ref{grn_section}, we start with a short introduction to the colored permutations group and we generalize Theorems \ref{main_thm_B} and \ref{thm_bala} to this case. In Section \ref{flag_des_section}, we provide a version of Theorem \ref{main_thm_B} for the flag descent parameter in type $B$.

\subsection{The colored permutations group} \label{grn_section}

\bde
Let $m$ and $n$ be positive integers. {\it The group of
colored permutations of $n$ digits with $m$ colors} is the wreath product
$$G_{m,n}=\mathbb{Z}_m \wr S_n=\mathbb{Z}_m^n \rtimes S_n,$$
consisting of all the pairs $(\vec{z},\tau)$, where
$\vec{z}$ is an $n$-tuple of integers between $0$ and $m-1$ and $\tau \in S_n$.
\ede

A convenient way to look at $\grn$ is to consider the alphabet
$$\Sigma:=\left\{1,\dots,n, 1^{[1]},\dots,n^{[1]}, \dots, 1^{[m-1]},\dots,n^{[m-1]} \right\},$$
as the set $[n]$ colored by the colors
$0,\dots,m-1$. Then, an element of $\grn$ is a bijection $\pi: \Sigma \rightarrow \Sigma$
satisfying the following condition:
$$\mbox{if } \pi (i^{[\alpha]})=j^{[\beta]}\mbox{, then }
\pi ( i^{[\alpha+1]})=j^{[\beta+1]},$$
where the exponent $[\cdot]$ is computed modulo $m$. The elements of $G_{m,n}$ are usually called {\em colored permutations}.

In particular, $G_{1,n}=S_n$ is the symmetric group, while $G_{2,n}=B_n$ is the group of signed permutations.

\bde
The {\it color order} on $\Sigma$ is defined to be
$$1^{[m-1]} \prec  \cdots \prec n^{[m-1]} \prec \cdots \prec 1^{[1]} \prec 2^{[1]} \prec \cdots \prec n^{[1]}  \prec 1 \prec \cdots \prec n.$$
\ede

\begin{defn}
Let $\pi \in G_{m,n}$. Assume that $\pi(1)=a_1^{[z_1]}$. We define $${\rm des}_G(\pi):={\rm des}_A(\pi)+\varepsilon(\pi),$$
where
\begin{equation}\label{desA}
{\rm des}_A(\pi):=|\{i \in [n-1] \mid \pi(i) \succ \pi(i+1)\}|,
\end{equation}
where `$\succ$' refers to the color order, and
\begin{equation}\label{epsilon}
\varepsilon(\pi):=\left\{\begin{array}{cc}
     1, &  {\rm if} \ z_1 \not \equiv 0 \ {\rm mod} \ m;\\
     0, &  {\rm if} \ z_1 \equiv 0  \ {\rm mod} \ m.
\end{array}\right.
\end{equation}
For example, if $\pi=[3,\bar{1},\bar{\bar{2}}] \in G_{3,3}$, we have ${\rm des}_G(\pi)= 2+0=2.$ Note that for $m=1$, $\des_G=\des$ and for $m=2$, $\des_G=\des_B$.

Moreover, we define the {\it Eulerian number of type $G_{m,n}$} to be:
$$A_m(n,k):=|\{\pi \in G_{m,n} \mid {\rm des}_G(\pi)=k\}|.$$
\end{defn}

Let $C\subseteq \Sigma$. Denote $C^{[t]}=\{x^{[i+t]}\mid x^{[i]} \in C\}$.
\begin{defn}
A {\it $\grn$-partition} is a set partition of $\Sigma$
into blocks such that the following conditions are satisfied:
\begin{itemize}
\item There exists at most one block satisfying $C^{[1]}=C$.
This block will be called the {\it zero-block}.
\item If $C$ appears as a block in the partition, then $C^{[1]}$ also appears in that partition.
\end{itemize}

Two blocks $C_1$ and $C_2$ will be called {\it equivalent} if there is a natural number $t \in \mathbb{N}$ such that $C_1=C_2^{[t]}$.

The number of $\grn$-partitions with $r$ non-equivalent nonzero blocks is denoted by $S_{m}(n,r)$.
\end{defn}
For example, the following is a $G_{3,4}-$ partition:
$$\{\{1,\bar{1},\bar{\bar{1}},2,\bar{2},\bar{\bar{2}}\},\{3,\bar{4}\},\{\bar{3},\bar{\bar{4}}\},\{\bar{\bar{3}},4\}\},$$ with a zero-block: $\{1,\bar{1},\bar{\bar{1}},2,\bar{2},\bar{\bar{2}}\}$.

We define now the concept of an {\it ordered} $\grn$-partition:

\begin{defn}
A $\grn$-partition is {\em ordered} if the set of blocks is totally ordered and the following conditions are satisfied:

\begin{itemize}
\item If the zero-block exists, then it appears as the first block.

\item For each nonzero block $C$, the blocks $C^{[i]}$ for $1 \leq i \leq m-1$ appear consecutively right after $C$, i.e. $C,C^{[1]}, C^{[2]},\dots, C^{[m-1]}$.
\end{itemize}
\end{defn}

The generalization of Theorem \ref{thm:typeA} in this setting is as follows.
\begin{thm}\label{main_thm_2}
For all positive integers $n,m$ and $r$, we have:
$$ S_m(n,r)= \frac{1}{m^r r!} \sum\limits_{k=0}^{r} {A_m(n,k){\binom{n-k}{r-k}}}.$$
\end{thm}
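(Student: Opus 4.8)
\textbf{Proof plan for Theorem~\ref{main_thm_2}.} The plan is to mimic the $B$-procedure from the proof of Theorem~\ref{main_thm_B}, generalizing it from two colors to $m$ colors. Rewriting the claimed identity, it suffices to show that
$$m^r r!\, S_m(n,r) = \sum_{k=0}^r A_m(n,k)\binom{n-k}{r-k},$$
and the left-hand side counts ordered $\grn$-partitions with $r$ non-equivalent nonzero blocks: indeed, such a partition is obtained from an unordered one by totally ordering its $r$ equivalence classes of nonzero blocks ($r!$ ways) and, within each class, choosing which representative $C$ comes first, the rest being forced to be $C^{[1]},\dots,C^{[m-1]}$ ($m$ ways per class, so $m^r$ in total), with the zero-block, if present, always placed first. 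So the task is to show the right-hand side counts the same set.

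The core construction (call it the \emph{$G$-procedure}) takes $\pi \in G_{m,n}$ with $\des_G(\pi)=k$, written in window notation over the colored alphabet, and inserts a separator after each $G$-descent and after $\pi(n)$; if $\varepsilon(\pi)=1$, i.e.\ the color of $\pi(1)$ is nonzero, this places a separator before $\pi(1)$, which will signal the presence of a zero-block. One then inserts $r-k$ artificial separators into some of the $n-k$ remaining spots (the gap before $\pi(1)$ being one admissible spot only when $\varepsilon(\pi)=0$, matching the type-$B$ convention), in $\binom{n-k}{r-k}$ ways. The blocks $C_1,\dots,C_r$ between consecutive separators then generate the ordered $\grn$-partition $[C_0, C_1, C_1^{[1]},\dots,C_1^{[m-1]}, \dots, C_r, C_r^{[m-1]}]$, where the zero-block $C_0$, when it exists, is the ``orbit closure'' $\{a^{[z+t]} : a^{[z]} \text{ among the first entries}, \ 0\le t\le m-1\}$ of the entries preceding the first separator. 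One checks, exactly as in type $B$, that distinct permutations give disjoint families of ordered partitions, so the $G$-procedure produces $\sum_{k=0}^r A_m(n,k)\binom{n-k}{r-k}$ distinct ordered $\grn$-partitions with $r$ non-equivalent nonzero blocks.

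It then remains to show surjectivity: every ordered $\grn$-partition $\lambda=[C_0, C_1,\dots,C_1^{[m-1]},\dots,C_r,\dots,C_r^{[m-1]}]$ arises this way. Here one reverses the procedure. For each nonzero class, the ordered partition has already singled out a representative $C_i$ (the first block of its run); write its elements in increasing color order and append a separator. If a zero-block $C_0$ is present, one must extract a canonical system of representatives for it: since $C_0^{[1]}=C_0$, pick the elements of color $0$ in $C_0$ (one per orbit), put them in increasing order at the front of the sequence, and add a separator after them. Concatenating, one reads off a colored permutation $\pi \in \grn$; the blocks being color-order-increasing guarantees $\des_G(\pi)\le r$, and the artificial separators are recovered as the ones not forced by $G$-descents. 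The main obstacle I anticipate is the bookkeeping around the zero-block and the role of $\varepsilon(\pi)$: one must verify that the ``separator before $\pi(1)$ iff color of $\pi(1)$ is nonzero'' convention makes the forward and backward maps mutually inverse, and in particular that when $\varepsilon(\pi)=1$ the recovered zero-block is genuinely a legitimate zero-block (closed under $[\cdot]^{[1]}$) and nonempty in the right sense. Unlike type $D$, there is no parity constraint forcing a ``switch'' correction, so no exceptional family appears and the count is clean; this is why the identity here has the simple form with no correction term.
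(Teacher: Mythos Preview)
Your proposal is correct and follows exactly the approach the paper intends: the paper simply states that the proof is similar to that of Theorem~\ref{main_thm_B} and omits it, and your $G$-procedure is precisely the natural $m$-color generalization of the $B$-procedure. One slip to fix: when $\varepsilon(\pi)=1$ the separator placed before $\pi(1)$ signals the \emph{absence} of a zero-block, not its presence (the zero-block is the orbit closure of the entries \emph{preceding} the first separator, as you correctly say two sentences later); the rest of your forward and backward constructions are already consistent with this corrected reading, so nothing else needs to change.
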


The proof is similar to that of Theorem \ref{main_thm_B}, so it is omitted.

\medskip

In order to generalize Theorem~\ref{thm:typeA_falling}, we define the {\em falling factorial of type} $\grn$ as follows: (see Equation 15 in \cite{Bala})
$$[x]_k^m:=\left\{ \begin{array}{ll}
1, & k=0 ;\\
(x-1)(x-1-m)\cdots (x-1-(k-1)m), & 1 \leq k \leq n.
\end{array}\right.$$

We have:
\begin{thm}\label{thm Balla for grn} Let $x \in \R$ and $n\in \mathbb{N}$. Then we have:
$$x^n=\sum\limits_{k=0}^n{S_m(n,k)[x]_k^m}.$$
\end{thm}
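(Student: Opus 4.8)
The plan is to mimic the geometric proof of Theorem~\ref{thm_bala} (type $B$), which itself generalizes Knop's / Blass--Sagan's lattice-point argument. Since the claimed identity is a polynomial identity in $x$, it suffices to verify it for infinitely many values of $x$; the natural choice is $x = 1 + mq$ for $q \in \mathbb{N}$, because then the factor $(x-1)(x-1-m)\cdots(x-1-(k-1)m) = m^k q(q-1)\cdots(q-k+1)$ has a clean combinatorial meaning. The left-hand side $x^n$ will count lattice points in an $n$-dimensional box, and the right-hand side will count the same points, grouped according to the maximal coordinate-equality pattern (i.e. the $G_{m,n}$-partition) they satisfy.

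First I would set up the lattice model. Fix $x = 1 + mq$ and consider the set of $n$-tuples $(x_1,\dots,x_n)$ where each $x_j$ ranges over a fixed set $V$ of size $x = 1+mq$; concretely, take $V = \{0\} \cup \{\,\zeta^{[c]}\cdot t : 1\le t\le q,\ 0\le c\le m-1\,\}$, i.e. a zero together with $q$ points in each of $m$ ``color classes'', so that $\mathbb{Z}_m$ acts on $V\setminus\{0\}$ freely by shifting the color. Then $|V|^n = x^n$ is the left-hand side. Next, to each tuple associate the $G_{m,n}$-partition of $\Sigma$ that records which coordinates are equal to $0$ (these indices, together with all their color-shifts, form the zero-block) and, for the nonzero coordinates, which pairs $(i,j)$ and color-offsets satisfy $x_i = \zeta^{[t]} x_j$ (these determine the non-equivalent nonzero blocks and their cyclic images). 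Conversely, given a $G_{m,n}$-partition with $r$ non-equivalent nonzero blocks, I would count the tuples realizing exactly that pattern: the coordinates in the zero-block are forced to $0$; for the first nonzero block there are $x-1 = mq$ free choices of a nonzero value in $V$; for the second block the chosen value and its $m-1$ color-shifts are excluded, leaving $x - 1 - m$ choices; continuing, the $k$-th nonzero block contributes $x - 1 - (k-1)m$ choices. Hence each $G_{m,n}$-partition with $r$ non-equivalent nonzero blocks accounts for exactly $[x]_r^m$ tuples, and summing over all such partitions gives $\sum_k S_m(n,k)[x]_k^m$, which matches $x^n$. Since this holds for all $q\in\mathbb{N}$, the polynomial identity follows.

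The step I expect to be the main obstacle — really the only subtle point — is checking that the assignment ``tuple $\mapsto$ its coordinate-equality pattern'' lands in $G_{m,n}$-partitions and is surjective with the right fibers; that is, one must verify that the equivalence relation ``$x_i$ and $x_j$ lie in the same $\mathbb{Z}_m$-orbit scaled appropriately'' genuinely produces a legal $\grn$-partition (at most one self-color-invariant block, and each block's color-shifts appearing), and that the counting of free values for the nonzero blocks is exactly $(x-1)(x-1-m)\cdots(x-1-(k-1)m)$ with no over- or under-counting. This is entirely analogous to the bookkeeping in the proof of Theorem~\ref{thm_bala}, with ``$\pm$'' replaced throughout by the cyclic group $\mathbb{Z}_m$ acting via the color superscript, and the arithmetic progression $x-1, x-3, x-5, \dots$ replaced by $x-1, x-1-m, x-1-2m, \dots$. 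Because of this close parallel, I would present the argument briefly and, exactly as the paper does for Theorem~\ref{main_thm_2}, remark that the details are a routine adaptation of the type-$B$ proof.
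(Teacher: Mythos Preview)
Your proposal is correct and follows essentially the same approach as the paper. The paper phrases the model geometrically---placing $x=mt+1$ points on the unit circle (one fixed point playing the role of $0$, together with $t$ points in each of the $m$ arcs cut out by the $m$th roots of unity) and then passing to the torus $(S^1)^n$ so as to interpret the coincidence patterns via the hyperplanes of $\mathcal{G}_{m,n}$---but this is exactly your abstract set $V=\{0\}\cup\{\zeta^{[c]}\cdot t\}$ with $q=t$, and the counting of fibers via the arithmetic progression $x-1, x-1-m,\dots$ is identical.
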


We present here the idea of the proof.
\begin{proof}[Sketch of the proof.]
Divide the unit circle $S^1$ in the plane into $m$ parts according to the $m$th roots of unity: $1,\rho_m,\rho_m^2,\dots,\rho_m^{m-1}$, see Figure \ref{S1}, where $m=3$ and the roots are represented by small bullets. This divides the circle into $m$ arcs. Now, in each arc, locate $t$ points in equal distances from each other (see Figure \ref{S1} where $t=5$ and the points are represented by small lines). Including the point $(1,0)$, we get $x=mt+1$ points on the unit circle.

Consider now the $n$-dimensional torus $(S^1)^n=S^1 \times \cdots \times S^1 $ with $x^n$ {\em lattice points on it}. The same arguments we presented in the proof of Theorem \ref{thm_bala} will apply now to Theorem \ref{thm Balla for grn}, when we interpret the $\grn$-partitions as intersections of subsets of hyperplanes in $\mathcal{G}_{m,n}$, where by $\mathcal{G}_{m,n}$ we mean the following generalized hyperplane arrangement for the colored permutations group:
\begin{eqnarray*}
\mathcal{G}_{m,n}&:=&\{ \ \{x_i=\rho_m^k x_j\} \mid 1 \leq i < j \leq n ,0 \leq k <m\} \\
&\cup& \{ \ \{x_i=0\} \mid 1 \leq i \leq n\},
\end{eqnarray*}
See e.g. \cite[p. 244]{OT}.
\end{proof}

\begin{figure}[!ht]
    \centering
    \begin{tikzpicture}[cap=round,line width=2pt]
  \draw  (0,0) circle (2cm);

   \foreach \angle in
    {20, 40, 60, 80, 100, 140, 160 , 180,
     200, 220, 260,280, 300, 320, 340}
  {
    \draw[line width=2pt] (\angle:1.9cm) -- (\angle:2.1cm);
  }

  \foreach \angle in {0,120,240}
    \draw[line width=2pt,fill] (\angle:2cm) circle [radius=0.1];

\draw[->, line width=1pt] (-2,0) -- (3,0);
\draw[->, line width=1pt] (0,-3) -- (0,3);
\node[above right] at (2cm,0){\tiny $\rho_3^0=1$};
\node[right] at (20:2.1cm){\tiny $1$};
\node[right] at (40:2.1cm){\tiny $2$};
\node[above right] at (60:2cm){\tiny $3$};
\node[above] at (80:2.1cm){\tiny $4$};
\node[above] at (100:2.1cm){\tiny $5$};

\node[above left] at (120:2cm){\tiny $\rho_3^1$};
\node[left] at (140:2.1cm){\tiny $\rho_3^1 \cdot 1$};
\node[left] at (160:2.1cm){\tiny $\rho_3^1 \cdot 2$};
\node[left] at (180:2.1cm){\tiny $\rho_3^1 \cdot 3$};
\node[left] at (200:2.1cm){\tiny $\rho_3^1 \cdot 4$};
\node[below left] at (220:2cm){\tiny $\rho_3^1 \cdot 5$};

\node[below left] at (240:2cm){\tiny $\rho_3^2$};
\node[below] at (260:2.1cm){\tiny \hspace{-5pt}$\rho_3^2 \cdot 1$};
\node[below] at (280:2.1cm){\tiny \hspace{5pt}$\rho_3^2 \cdot 2$};
\node[below right] at (300:2cm){\tiny $\rho_3^2 \cdot 3$};
\node[right] at (320:2.1cm){\tiny $\rho_3^2 \cdot 4$};
\node[right] at (340:2.1cm){\tiny $\rho_3^2 \cdot 5$};

\end{tikzpicture}
\caption{The 16 lattice points on $S^1$, representing the first coordinate for $m=3$ and $t=5$.}
    \label{S1}
\end{figure}
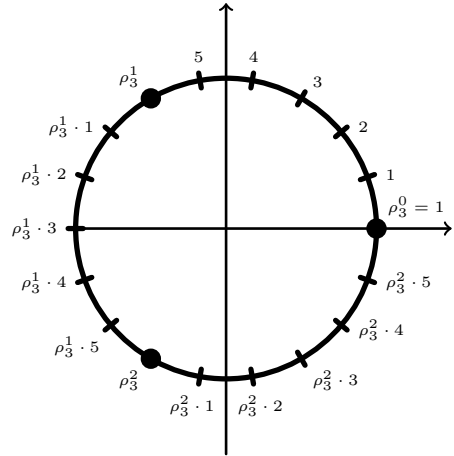

\subsection{The flag descent parameter for the Coxeter group of type~$B$}\label{flag_des_section}

Another possibility to generalize these results is to consider the {\em flag descent statistics} defined on group of signed permutations.
Such parameters produce, in this context, similar expressions of those presented in the previous sections, but less elegant.
As an example, we show here only one of these possible extensions.

This is a different generalization of Theorem~ \ref{main_thm_B} by using the {\em flag-descent number} fdes, that is defined in~\cite{AR} for a signed permutation $\be \in B_n$:
$${\rm fdes}(\be):=2 \cdot {\des_A}(\be)+\varepsilon(\be).$$
where $\des_A(\be)$ is defined as in Equation \eqref{desA}, and $\varepsilon(\be)$ as in Equation \eqref{epsilon}.
\smallskip

We denote by $A^*_B(n,k)$ the number of permutations $\be \in B_n$ satisfying ${\rm fdes}(\be)=k-1$, and by
$S^*_{B}(n,r)$ the number of $B_n$-partitions having exactly $r$ blocks. Here, differently from $S_B(n,r)$, every block counts: the zero-block is counted once, and any pair $\pm C_i$ is counted twice.

\medskip

These two new parameters satisfy the identity stated below:

\begin{thm}\label{thm_flag}
For all nonnegative integers $n,r$ where $n \geq r$, we have:
$$2^{\left\lfloor \frac{r}{2}\right\rfloor} \left\lfloor \frac{r}{2} \right\rfloor ! \ S^*_{B}(n,r)= \sum\limits_{k=1}^r {A^*_B(n,k)\binom{n- \left\lceil\frac{k}{2}\right\rceil}{  \left\lfloor\frac{r-k}{2}\right\rfloor}}.$$
\end{thm}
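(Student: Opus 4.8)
The plan is to mimic the bijective proof of Theorem~\ref{main_thm_B}, keeping careful track of the two new features: the $B_n$-partitions are counted with every block (so pairs $\pm C_i$ count twice and the zero-block counts once), and the descent statistic is the flag-descent $\mathrm{fdes}(\be)=2\cdot\des_A(\be)+\varepsilon(\be)$, which doubles ordinary descents and adds one for a negative first entry. First I would reinterpret the left-hand side: if a $B_n$-partition has $r$ blocks in total, then writing $r=2a$ or $r=2a+1$ according to whether the zero-block is absent or present, one has $a=\lfloor r/2\rfloor$ pairs of nonzero blocks, and the number of ordered versions of such a partition is $2^a a!=2^{\lfloor r/2\rfloor}\lfloor r/2\rfloor!$ (choosing an order of the $a$ pairs and which of $C_i,-C_i$ comes first, with the zero-block forced to the front). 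So the left side counts ordered $B_n$-partitions with exactly $r$ blocks total.

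Next I would describe the procedure producing such ordered partitions from a signed permutation $\be$ with $\mathrm{fdes}(\be)=k-1$. The idea is to run the $B$-procedure of Theorem~\ref{main_thm_B}, but now the number of separators we may insert is governed by $\mathrm{fdes}$ rather than $\des_B$. Writing $k-1=2\des_A(\be)+\varepsilon(\be)$, the ``forced'' separators are the $\des_A(\be)$ ones coming from ordinary descents together with, if $\varepsilon(\be)=1$, one separator before $\be(1)$ creating a zero-block; after also placing the mandatory separator after $\be(n)$ this yields a partition with $2\des_A(\be)+1$ blocks if $\varepsilon=0$ (an all-nonzero partition: $\des_A$ pairs... wait, $2\des_A+1$ blocks is odd, so actually one must re-examine the parity bookkeeping here) or $2\des_A(\be)+1$ blocks if $\varepsilon=1$ including the zero-block. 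The point of the ceiling/floor in the binomial coefficient is exactly this bookkeeping: from $\be$ one adds $\lfloor (r-k)/2\rfloor$ artificial separators among the $n-\lceil k/2\rceil$ available spots (the $\lceil k/2\rceil$ being the number of entries ``used up'' by the forced structure), and each artificial separator, when duplicated under the $\pm$-symmetry, creates two new blocks — hence $r-k$ must be brought up by increments matching the parity. I would verify that this gives $\binom{n-\lceil k/2\rceil}{\lfloor (r-k)/2\rfloor}$ ordered $B_n$-partitions with $r$ blocks from each such $\be$, and that distinct $\be$ give disjoint outputs.

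Then I would prove surjectivity: given an ordered $B_n$-partition $\lambda$ with $r$ blocks, reconstruct $\be$ by the same recipe as in Theorem~\ref{main_thm_B} — list the positive part of the zero-block (if present) in increasing order, then each nonzero block $C_i$ in increasing order, reading signs off directly — and check that $\mathrm{fdes}(\be)\le r-1$ so that $\lambda$ is hit with the right number of artificial separators. The parity of $r$ versus the presence of the zero-block must be shown consistent with the parity constraints hidden in $\lfloor r/2\rfloor$ and $\lceil k/2\rceil$; this is where I expect the main obstacle to lie. Unlike the clean type-$B$ case, here the two sides are ``less elegant'' precisely because the flag statistic interleaves the even contribution ($2\des_A$) and the odd contribution ($\varepsilon$), so the floor and ceiling functions must be reconciled by a short case analysis on the parity of $k$ and of $r$. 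Once that casework is done — checking the four combinations of parities and confirming the binomial coefficient counts artificial-separator placements correctly in each — the identity follows by the same double-counting argument as before. I would present this casework explicitly as the technical heart of the proof, with the bijective framework otherwise transported verbatim from the proof of Theorem~\ref{main_thm_B}.
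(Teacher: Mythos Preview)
Your plan is exactly the approach the paper intends: it explicitly omits the proof of Theorem~\ref{thm_flag}, saying only that ``the proof uses arguments similar to those in the proof of Theorem~\ref{main_thm_B},'' and your proposal is precisely to transport that bijective $B$-procedure and handle the parity bookkeeping introduced by $\mathrm{fdes}$ via the floor/ceiling terms. Your identification of the left-hand side as the count of ordered $B_n$-partitions with $r$ total blocks (since $\lfloor r/2\rfloor$ is the number of nonzero pairs regardless of whether a zero-block is present), and of the case analysis on the parities of $k$ and $r$ as the technical crux, is on target; the ``wait'' you flagged is exactly the place where the even contribution $2\des_A$ and the odd contribution $\varepsilon$ must be reconciled, and carrying that casework through is all that remains.
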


The proof uses arguments similar to those in the proof of Theorem~\ref{main_thm_B}, and is therefore omitted.

\end{document}